\documentclass[12pt,leqno]{amsart}
\usepackage{latexsym}
\usepackage{amsmath}
\usepackage{amsthm}
\usepackage{amssymb}
\usepackage{amsfonts}

\def\curl{\operatorname{curl}}
\usepackage{mathrsfs}
\usepackage{fullpage}
\usepackage{graphicx}
\begin{document}
\theoremstyle{plain}
\newtheorem{theorem}{Theorem}[section]
\newtheorem{MainThm}{Theorem}
\newtheorem{thm}{Theorem}[section]
\newtheorem{clry}[thm]{Corollary}
\newtheorem{notation}[]{Notation}
\newtheorem{proposition}[theorem]{Proposition}
\newtheorem{lemma}[thm]{Lemma}
\newtheorem{deft}[thm]{Definition}
\newtheorem{hyp}{Assumption}
\newtheorem*{conjecture}{Conjecture}
\newtheorem{question}{Question}
\newtheorem{Case}{Case}
\newtheorem{SCase}{Case}
\newtheorem{SSCase}{Case}
\newtheorem{SSSCase}{Case}
\newtheorem{SSSSCase}{Case}
\newtheorem{Step}{Step}
\newtheorem{SStep}{Step}
\newtheorem{remark}[thm]{Remark}
\newtheorem{claim}[thm]{Claim}

\theoremstyle{definition}
\newtheorem*{definition}{Definition}
\newtheorem{assumption}{Assumption}
\newtheorem{rem}[thm]{Remark}
\newtheorem*{acknow}{Acknowledgements}

\newtheorem{example}[thm]{Example}
\newtheorem*{examplenonum}{Example}
\numberwithin{equation}{section}
\newcommand{\nocontentsline}[3]{}
\newcommand{\tocless}[2]{\bgroup\let\addcontentsline=\nocontentsline#1{#2}\egroup}
\newcommand{\eps}{\varepsilon}
\renewcommand{\phi}{\varphi}
\renewcommand{\d}{\partial}
\newcommand{\re}{\mathop{\rm Re} }
\newcommand{\im}{\mathop{\rm Im}}
\newcommand{\mR}{\mathbb{R}}
\newcommand{\mC}{\mathbb{C}}
\newcommand{\mN}{\mathbb{N}} 
\newcommand{\mZ}{\mathbb{Z}} 
\newcommand{\mK}{\mathbb{K}}
\newcommand{\supp}{\mathop{\rm supp}}
\newcommand{\abs}[1]{\lvert #1 \rvert}
\newcommand{\norm}[1]{\lVert #1 \rVert}
\newcommand{\csubset}{\Subset}
\newcommand{\detg}{\lvert g \rvert}
\newcommand{\msetminus}{\setminus}

\newcommand{\br}[1]{\langle #1 \rangle}

\newcommand{\ehat}{\,\hat{\rule{0pt}{6pt}}\,}
\newcommand{\echeck}{\,\check{\rule{0pt}{6pt}}\,}
\newcommand{\etilde}{\,\tilde{\rule{0pt}{6pt}}\,}

\newcommand{\tr}{\mathrm{tr}}
\newcommand{\mdiv}{\mathrm{div}}

\allowdisplaybreaks

\title{An $H^{s,p}(\curl;\Omega)$ estimate for the Maxwell system}

\author{Manas Kar}
\address{Department of Mathematics and Statistics,
University of Jyv{\"a}skyl{\"a},
P.O.Box 35 (MaD) FI-40014, Jyv{\"a}skyl{\"a}, Finland.}
\email{manas.m.kar@maths.jyu.fi}

\author{Mourad Sini}
\address{RICAM, Austrian Academy of Sciences,
Altenbergerstrasse 69, A-4040, Linz, Austria.}
\email{mourad.sini@oeaw.ac.at}

\date{\today}

\begin{abstract}
We derive an $H_{0}^{s,p}(\curl;\Omega)$ estimate for the solutions of the Maxwell type equations modeled with anisotropic and $W^{s, \infty}(\Omega)$-regular coefficients.  
Here, we obtain the regularity of the solutions for the integrability and smoothness indices $(p, s)$ in a plan domain characterized by the apriori lower/upper bounds of $a$ 
and the apriori upper bound of its H{\"o}lder semi-norm of order $s$. The proof relies on a perturbation argument generalizing Gr{\"o}ger's $L^p$-type estimate, known for the elliptic problems, to the Maxwell system.
\end{abstract}

\maketitle

%

\section{Introduction}
Assume $\Omega\subset\mathbb{R}^3$ to be a bounded and Lipschitz domain. 
Let the coefficient $a$ be a $3 \times 3$ matrix, with elements in $W^{s, \infty}(\Omega)$, $s\geq 0$, satisfying the uniform ellipticity condition, i.e.,
there exist positive constants $m,  M$ such that
\begin{equation}\label{ellipticity}
m |\xi|^2 \leq a(x)\xi\cdot\overline{\xi} \leq M |\xi|^2,
\end{equation}
for all $\xi\in\mathbb{C}^3$ and almost every $x\in\Omega$ and having, if $s>0$, a bounded H{\"o}lder semi-norm with exponent $s$, i.e., there exists $\tilde M>0$ such that 
\begin{equation}\label{Holder-bound}
|a|_{C^{0,s}}\leq \tilde M.
\end{equation}
The goal of this work is to 
study the well posedness of the following boundary value problem
\begin{equation}\label{max}
\begin{cases}
\curl (a\curl u) + k^2u = f, \ \text{in}\ \Omega \\
\nu\wedge u = 0, \ \text{on}\ \partial\Omega
\end{cases}
\end{equation}
in the appropriate Sobolev spaces with fractional order.
Here, we denote by $\nu$ the outer unit normal on $\partial\Omega$ and $k$ the frequency.
The problem \eqref{max} covers the case \footnote{One way to deal with the general case where $k^2$ is replaced by $b \in (L^{\infty}(\Omega))^3$ 
(for example in (\ref{maxE}) replace $k^2$ by  $b(x):=-k^2 \epsilon^{-1}(x)$, $x\in \Omega$, with  $\epsilon \in (L^\infty(\Omega))^{3\times 3}$ 
lower bounded by a positive constant) is discussed in Remark \ref{general-case}.} when the electric field $E$ satisfies
\begin{equation}\label{maxE}
\begin{cases}
\curl (\mu^{-1}\curl E) + k^2E = f, \ \text{in}\ \Omega \\
\nu\wedge E = 0, \ \text{on}\ \partial\Omega
\end{cases}
\end{equation}
 or the magnetic field $H$ satisfies
\begin{equation}\label{maxH}
\begin{cases}
\curl (\epsilon^{-1}\curl H) + k^2H = f, \ \text{in}\ \Omega \\
\nu\wedge H = 0, \ \text{on}\ \partial\Omega
\end{cases}
\end{equation}
with anisotropic permitivity $\epsilon$ and permiability $\mu$.
\bigskip
 
The well posedness of the problem \eqref{max} has been derived in the $L^2$-based Sobolev spaces, i.e., $H(\curl;\Omega)$,
for domains with minimal smoothness and minimum regularity assumptions on the coefficients, see for instance \cite{Monk}.  
\bigskip

In order to study the question of regularity for the inhomogeneous Dirichlet-Laplace problem, in \cite{Je-Keni}, Jerison and Kenig used harmonic analysis 
technique to obtain a best possible estimates for the solutions in Sobolev-Besov $L_{s}^{p}(\Omega)$ norms with optimal range 
of the smoothness index $s$ and the integrability index $p$. In \cite{Mitrea,MiT}, M. Mitrea, D. Mitrea and J. Pipher considered an inhomogeneous Maxwell equations 
in a Lipschitz sub-domain, where the electric permitivity $\epsilon$ and the magnetic permeability $\mu$ taken to be constants. 
The regularity estimate for the solutions of \eqref{max}, developed in the $L^p$ settings for the optimal values of $p$'s,
can be found in \cite{Mitrea}. In \cite{MiT}, M. Mitrea showed the well posedness in the Sobolev-Besov spaces $H^{s, p}_0(\curl;\Omega)$ with
the optimal range of the smoothness index $s$ and the integrability index $p$ which generalizes, to the Maxwell system, the results by Jerison and Kenig
mentioned above.
\bigskip
 
Regarding variable coefficients and under weak regularity assumptions on the domain $\Omega$ and only $L^\infty$-regularity assumption of the coefficients, the well posedness for the 
divergence form elliptic problems has been studied by Gr{\"o}ger. In \cite{Groe}, he demonstrated the well posedness in the Sobolev space 
$W^{1,p}(\Omega)$ for $p>2$, which is a generalization of the work of Meyers \cite{Me}, known for Dirichlet boundary conditions, to mixed type boundary conditions.
The proof is based on a perturbation argument via the Banach fixed point theorem, see \cite{Groe}. 
We refer the reader to the text books \cite{Ev, GT1} for $L^p$-estimates of the solutions of elliptic problems in case of smooth coefficients.
For the Maxwell model, 
a $W^{1,p}$-type regularity estimate for the solutions
has been derived by Bao, Li, and Zhou considering $\mu$ to be constant and $\epsilon$ as piecewise constant, see \cite{BYZ}. 
Related estimates for smooth coefficients are derived  by Yin in \cite{Yin}, see also the references therein.
In the recent work \cite{K-S-2}, we proved an estimate of the solutions for the
problem in the Sobolev spaces $H_{0}^{1,p}(\curl;\Omega)$, for $p$ near $2$, where $a$ is taken to be a matrix valued function 
satisfying (\ref{ellipticity}). 
\bigskip

In this work, we use the approach by Gr{\"o}ger to deal with the regularity issue regarding the model \eqref{max} in the spaces $H^{s, p}_0(\curl;\Omega)$ for 
$W^{s, \infty}(\Omega)$ coefficients for a certain range of $s$ and $p$, see Theorem \ref{pert} and Figure \ref{Fig 1}. This completes the work in \cite{K-S-2} and provides the 
 Gr{\"o}ger-Meyers's regularity estimate corresponding to the formentioned results in \cite{MiT}.
Let us also mention that compared to the works in \cite{BYZ} and \cite{Yin},  
our estimates are derived for less regular coefficients (for instance for $s=0$) and show that the solution operator for the model \eqref{max} is an isomorphism. 
In addition to the general interest of such regularity estimates, this isomorphism property is useful for justifying a shape reconstruction algorithm in the theory of inverse problems,
see \cite{K-S-2, SiY}.
\bigskip

The paper is organized as follows. In Section \ref{defi}, we recall the basic definitions of the
Sobolev and Besov spaces of functions in Lipschitz domains and also some functional properties on those spaces. 
Then we state the main result in this paper in Section \ref{theosec} and finally, a detailed proof of the result is given in Section \ref{proofsec}. 

\begin{section}{Definitions and preliminary results}\label{defi}
\subsection{Sobolev and Besov spaces in Lipschitz domains}
For $1<p<\infty$ and $-\infty<s<\infty$ the Sobolev space $L_{s}^{p}(\mathbb{R}^3)$ is defined by \footnote{The space $L_{s}^{p}(\mathbb{R}^3)$
can also be defined using the Fourier transform 
$
 L_{s}^{p}(\mathbb{R}^3) := \{f ; f\in\mathcal{S}', \|f\|_{p}^{s} < \infty\},
$
with the norm 
$
 \|f\|_{p}^{s} = \|\mathcal{F}^{-1}\{(1+|\xi|^2)^{\frac{s}{2}}\mathcal{F}{f}\}\|_{L^p(\mathbb{R}^3)},
$
where $s\in\mathbb{R}$. Here, $\mathcal{F}$ and $\mathcal{F}^{-1}$ represent the Fourier transform and inverse Fourier transform respectively and 
$\mathcal{S}'$ represents the space of tempered distributions.}
\[
L_{s}^{p}(\mathbb{R}^3) := \{(I-\Delta)^{-\frac{s}{2}}g; g\in L^p(\mathbb{R}^3)\},
\]
 with the norm
\[
 \|f\|_{L_{s}^{p}(\mathbb{R}^3)} = \|(I-\Delta)^{\frac{s}{2}}f\|_{L^p(\mathbb{R}^3)}.
\]

 For any Lipschitz domain $\Omega\subset\mathbb{R}^3$, 
we denote by $L_{s}^{p}(\Omega)$ the Sobolev space defined as the restrictions to $\Omega$ of the elements in $L_{s}^{p}(\mathbb{R}^3).$
The norm is defined 
as follows:
\[
\|f\|_{L_{s}^{p}(\Omega)} := \inf \{\|g\|_{L_{s}^{p}(\mathbb{R}^3)} ; \mathcal{R}_{\Omega}g =f \},
\]
where $\mathcal{R}_{\Omega}g$
denotes the restriction of the function $g$ from $\mathbb{R}^3$ to $\Omega$.
Moreover, if we define the space $W^{m,p}(\Omega)$ by 
\[
 W^{m,p}(\Omega) :=\{f\in L^p(\Omega) ; \ \frac{\partial^{\beta}f}{\partial x^{\beta}}\in L^p(\Omega), |\beta|\leq m\},
\]
equipped with the norm
\[
 \|f\|_{W^{m,p}(\Omega)} := \left(\sum_{|\beta|\leq m}\int_{\Omega}|\frac{\partial^{\beta}f}{\partial x^{\beta}}|^pdx\right)^{1/p},
\]
for $m\geq 1$, an integer, and $1<p<\infty$, then we have the equality
\[
 L_{m}^{p}(\Omega) = W^{m,p}(\Omega),
\]
see for instance \cite{Bergh} and \cite{Je-Keni}, where as usual $\frac{\partial^{\beta}}{\partial x^{\beta}} := 
\frac{\partial^{|\beta|}}{\partial x_{1}^{\beta_1}\partial x_{2}^{\beta_2}\partial x_{3}^{\beta_3}}$ with $|\beta| = \beta_1+\beta_2+\beta_3.$
Using Stein's extension operator, the space $L_{s}^{p}(\Omega)$, $0<s<1$, can be interpreted as the complex interpolation \footnote{A detailed discussion about this space can be found in \cite{Adams} and \cite{Bergh}, for instance.} space between $L^p(\Omega)$ and $W^{1,p}(\Omega)$, i.e.,
\[
[L^p(\Omega),W^{1,p}(\Omega)]_{[s]} = L_{s}^{p}(\Omega), 
\]
for all $1<p<\infty$.
For $p=\infty$ and $0<s<1$, the space $W^{s,\infty}(\Omega)$ can be viewed as the space of functions
\[
\left\{ \phi\in L^{\infty}(\Omega) : \frac{|\phi(x)-\phi(y)|}{|x-y|^s} \in L^{\infty}(\Omega\times\Omega)\right\}.
\]
Basically this space is equivalent to the H{\"o}lder continuous space $C^{0,s}(\Omega)$ with exponent $s$ and the norm can be defined as
\[
\|\phi\|_{W^{s,\infty}(\Omega)}
= \|\phi\|_{L^{\infty}(\Omega)} + |\phi|_{C^{0,s}},
\]
where the H{\"o}lder semi-norm is denoted by
\[
|\phi|_{C^{0,s}} := \sup_{\substack{
x, y\in\Omega \\
x\neq y}}
\frac{|\phi(x)-\phi(y)|}{|x-y|^s}.
 \]

According to \cite{Je-Keni}, for $1<p<\infty$ and $s\in\mathbb{R},$ we define $L_{s,0}^{p}(\Omega)$
as the space of all distributions $f \in L_{s}^{p}(\mathbb{R}^3)$ such that 
$supp \ f \subset\overline{\Omega}$
and the norm is 
$$ \|f\|_{L_{s,0}^{p}(\Omega)} := \|f\|_{L_{s}^{p}(\mathbb{R}^3)}.$$
It is known that $C_{0}^{\infty}(\Omega)$ is dense in $L_{s,0}^{p}(\Omega)$ for all values of $s$ and $p $ with $p>1$.
For positive $s$, $L_{-s}^{p}(\Omega)$ is defined as the space of distributions in $\Omega$ such that
\[
 \|f\|_{L_{-s}^{p}(\Omega)} := \sup\{|\langle f, \varphi\rangle|; \ \varphi\in C_{0}^{\infty}(\Omega), \|\tilde\varphi\|_{L_{s}^{q}(\mathbb{R}^3)}\leq 1\}<\infty,
\]
where tilde denotes the extension by zero outside $\Omega$ and $1/p+ 1/q = 1.$ For all values of $p$ and $s$, $C^{\infty}(\overline{\Omega})$
is dense in $L_{s}^{p}(\Omega).$ Also, $C_{0}^{\infty}(\Omega)$ is dense in $L_{s}^{p}(\Omega)$, for $s\leq0$. In addition, for any $s\in\mathbb{R},$
\[
 L_{-s,0}^{q}(\Omega) = (L_{s}^{p}(\Omega))^{'} \ \ \text{and}\ \ L_{-s}^{q}(\Omega) = (L_{s,0}^{p}(\Omega))^{'},
\]
see for instance in [\cite{Je-Keni}, Proposition 2.4, Proposition 2.9] and \cite{MiT}.
For each $p$ and $s$ satisfying $1<p<\infty,$ $-1+1/p<s<1/p,$ there exists a linear and bounded extension operator 
\[
E_{ext} : L_{s}^{p}(\Omega) \rightarrow  L_{s}^{p}(\mathbb{R}^3),
\]
 by
\[
E_{ext}(u) = \tilde{u},
\]
with the property that $supp \ \tilde u\subset\overline{\Omega}$, see [\cite{HTRi}, Theorem 3.5].
In addition, $Range (E_{ext}) = L_{s,0}^{p}(\Omega)$, which allows the following identification
\[
 L_{s}^{p}(\Omega) = L_{s,0}^{p}(\Omega); \ \ \forall \ p\in(1,\infty), \ \forall \ s\in(-1+1/p,1/p).
\]
Thus, if $p,q\in(1,\infty)$ are such that $ 1 / p+ 1 / q=1,$ then
\begin{equation}\label{dual}
 (L_{s}^{p}(\Omega))^{'} = L_{-s}^{q}(\Omega), \ \ \forall\ s\in(-1+{1/p},{1/p}),
\end{equation}
and hence $L_{s}^{p}(\Omega)$ is reflexive. Note that the product space
$L_{s}^{p}(\Omega)\times L_{s}^{p}(\Omega)$ is a Banach space with the usual graph norm as well as with the equivalent norm
\[
\|(f_1,f_2)\|_{L_{s}^{p}(\Omega)\times L_{s}^{p}(\Omega)} = \left(\|f_1\|_{L_{s}^{p}(\Omega)}^{p} + \|f_2\|_{L_{s}^{p}(\Omega)}^{p}\right)^{{1/p}}. 
\]

In the following lemma we discuss the characterization of the dual of $L_{s}^{p}(\Omega)\times L_{s}^{p}(\Omega)$ with suitable $p$ and $s$.
\begin{lemma}\label{Represation theo}
Let $1<p, q<\infty$ be real numbers with ${1/p} + {1/q} = 1$ and $s$ be such that $-1+{1/p}<s<{1/p}$. Assume that $\Omega\subset\mathbb{R}^3$ be a bounded Lipschitz domain.
Then for every fixed $g = (g_1,g_2) \in L_{s}^{p}(\Omega) \times L_{s}^{p}(\Omega),$ the operator $\mathcal{F}^*$
defined by
\[
 \mathcal{F}^*f := {}_{L_{s}^{p}(\mathbb{R}^3)}\langle \tilde{g_1}, \tilde{f_1}\rangle_{L_{-s}^{q}(\mathbb{R}^3)} + {}_{L_{s}^{p}(\mathbb{R}^3)}\langle \tilde{g_2}, \tilde{f_2} \rangle_{L_{-s}^{q}(\mathbb{R}^3)},
\]
is a continuous linear functional on $L_{-s}^{q}(\Omega) \times L_{-s}^{q}(\Omega),$
where $\tilde{f_j}\in L_{-s}^{q}(\mathbb{R}^3)$ is any extension of $f_j$ satisfying $\mathcal{R}_{\Omega}\tilde{f_j} = f_j$ and $\tilde g_j$ is an extension of $g_j$ by zero outside $\Omega$, for $j=1,2$.  

Conversely, every $\mathcal{F}^* \in (L_{-s}^{q}(\Omega)\times L_{-s}^{q}(\Omega))^{'}$ can be written in the above form with a uniquely determined $g \in L_{s}^{p}(\Omega)\times L_{s}^{p}(\Omega).$
Moreover, there exist $c_1, c_2>0$ such that
\[
 c_1 \|g\|_{L_{s}^{p}(\Omega) \times L_{s}^{p}(\Omega)}\leq \|\mathcal{F}^*\|_{(L_{-s}^{q}(\Omega)\times L_{-s}^{q}(\Omega))^{'}} \leq c_2 \|g\|_{L_{s}^{p}(\Omega) \times L_{s}^{p}(\Omega)}.
\]
\end{lemma}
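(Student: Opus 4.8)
The plan is to reduce the statement to the single-space duality $(L_{s}^{p}(\Omega))' = L_{-s}^{q}(\Omega)$ recorded in \eqref{dual}. Since $s\in(-1+1/p,1/p)$, the space $L_{s}^{p}(\Omega)$ is reflexive, so \eqref{dual} also yields $(L_{-s}^{q}(\Omega))' = L_{s}^{p}(\Omega)$; together with the elementary fact that the dual of a finite product of Banach spaces is the product of the duals (the $\ell^{p}$ product norm fixed on our spaces being equivalent to the graph norm), this already identifies $(L_{-s}^{q}(\Omega)\times L_{-s}^{q}(\Omega))'$ with $L_{s}^{p}(\Omega)\times L_{s}^{p}(\Omega)$ as Banach spaces. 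The genuine content is therefore to show that the abstract representing element $g$ acts through the \emph{explicit} extension formula defining $\mathcal{F}^*$, and to track the two constants $c_1,c_2$.

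First I would treat the forward direction. Fix $g=(g_1,g_2)\in L_{s}^{p}(\Omega)\times L_{s}^{p}(\Omega)$. Because $s\in(-1+1/p,1/p)$ we may invoke the identification $L_{s}^{p}(\Omega)=L_{s,0}^{p}(\Omega)$ furnished by $E_{ext}$, so each zero extension $\tilde g_j$ lies in $L_{s,0}^{p}(\Omega)\subset L_{s}^{p}(\mathbb{R}^3)$, is supported in $\overline\Omega$, and has $\|\tilde g_j\|_{L_{s}^{p}(\mathbb{R}^3)}$ comparable to $\|g_j\|_{L_{s}^{p}(\Omega)}$ (the infimum definition of the restriction norm gives one inequality, boundedness of $E_{ext}$ the other). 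The first point to verify is that ${}_{L_{s}^{p}(\mathbb{R}^3)}\langle \tilde g_j,\tilde f_j\rangle_{L_{-s}^{q}(\mathbb{R}^3)}$ does not depend on the chosen extension $\tilde f_j$ of $f_j$: if $\tilde f_j,\tilde f_j'$ are two extensions, then $h:=\tilde f_j-\tilde f_j'\in L_{-s}^{q}(\mathbb{R}^3)$ restricts to $0$ on $\Omega$, so $\langle \varphi,h\rangle=0$ for every $\varphi\in C_{0}^{\infty}(\Omega)$ (such $\varphi$ is supported strictly inside $\Omega$), and since $C_{0}^{\infty}(\Omega)$ is dense in $L_{s,0}^{p}(\Omega)$ and the pairing is continuous, $\langle \tilde g_j,h\rangle=0$. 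Linearity in $f$ is immediate, and the Hölder-type estimate for the $\mathbb{R}^3$-pairing, followed by the infimum over extensions of $f_j$, gives
\[
|\mathcal{F}^* f| \le \sum_{j=1}^{2}\|\tilde g_j\|_{L_{s}^{p}(\mathbb{R}^3)}\,\|f_j\|_{L_{-s}^{q}(\Omega)} \le c_2\,\|g\|_{L_{s}^{p}(\Omega)\times L_{s}^{p}(\Omega)}\,\|f\|_{L_{-s}^{q}(\Omega)\times L_{-s}^{q}(\Omega)},
\]
so $\mathcal{F}^*$ is a bounded functional and the right-hand inequality holds.

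For the converse, let $\mathcal{F}^*\in(L_{-s}^{q}(\Omega)\times L_{-s}^{q}(\Omega))'$. Restricting it to the coordinate subspaces produces $\mathcal{F}^*_j\in(L_{-s}^{q}(\Omega))'$ with $\mathcal{F}^* f=\mathcal{F}^*_1 f_1+\mathcal{F}^*_2 f_2$. By reflexivity and \eqref{dual}, each $\mathcal{F}^*_j$ is represented by a unique $g_j\in L_{s}^{p}(\Omega)$; unwinding the definition of that representation against the identification $L_{s}^{p}(\Omega)=L_{s,0}^{p}(\Omega)$ shows it is precisely the extension pairing above, so $g=(g_1,g_2)$ does the job and is unique by uniqueness in the single-space duality. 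The lower bound $c_1\|g\|\le\|\mathcal{F}^*\|$ then follows by testing $\mathcal{F}^*$ against vectors $(f_1,0)$ and $(0,f_2)$ and invoking the norm equivalence in each component isomorphism $(L_{-s}^{q}(\Omega))'\cong L_{s}^{p}(\Omega)$.

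I expect the main obstacle to be the careful matching of the concrete extension pairing with the abstract reflexive dual, and in particular the legitimate use of the zero extension $\tilde g_j$: this is exactly where the identification $L_{s}^{p}(\Omega)=L_{s,0}^{p}(\Omega)$, hence the restriction $-1+1/p<s<1/p$, is indispensable, since outside this range the zero extension need not land in $L_{s}^{p}(\mathbb{R}^3)$. Once this bookkeeping and the well-definedness of the pairing are in place, the norm estimates are routine.
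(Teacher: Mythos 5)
Your proposal is correct and follows essentially the same route as the paper: the upper bound via the H{\"o}lder-type estimate, and the converse by decomposing $\mathcal{F}^*$ into its actions on the two coordinate subspaces (the paper's operators $T_1f_1=(f_1,0)$, $T_2f_2=(0,f_2)$) and invoking the single-space duality $(L_{-s}^{q}(\Omega))' = L_{s}^{p}(\Omega)$ coming from \eqref{dual}. The only cosmetic differences are that you verify well-definedness of the extension pairing explicitly (which the paper leaves implicit) and obtain the lower bound by testing against $(f_1,0)$ and $(0,f_2)$ componentwise, whereas the paper gets both constants $c_1,c_2$ at once from the open mapping theorem applied to the natural bijection $A : L_{s}^{p}(\Omega)\times L_{s}^{p}(\Omega) \rightarrow (L_{-s}^{q}(\Omega)\times L_{-s}^{q}(\Omega))'$.
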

\begin{proof}
Applying the H{\"o}lder inequality we have the continuity of $\mathcal{F}^*$ together with the estimate
\[
 \|\mathcal{F}^*\|_{(L_{-s}^{q}(\Omega)\times L_{-s}^{q}(\Omega))^{'}} \leq c_2 \|g\|_{L_{s}^{p}(\Omega) \times L_{s}^{p}(\Omega)}.
\]
 Conversely, for $j=1,2$, we define $$T_j : L_{-s}^{q}(\Omega) \rightarrow L_{-s}^{q}(\Omega) \times L_{-s}^{q}(\Omega)$$  by
\[
 T_1f_1 := (f_1,0) 
\]
and 
\[
 T_2f_2 := (0,f_2),
\]
for all $ f_1,f_2 \in L_{-s}^{q}(\Omega).$ Note that
$T_j$ is a continuous and linear operator. Since $\mathcal{F}^*\in (L_{-s}^{q}(\Omega)\times L_{-s}^{q}(\Omega))^{'}$, then for 
$f = (f_1,f_2) \in L_{-s}^{q}(\Omega) \times L_{-s}^{q}(\Omega),$ we have
\[
 \mathcal{F}^*f = (\mathcal{F}^*\circ T_1)f_1 + (\mathcal{F}^*\circ T_2)f_2.
\]
Since $T_j$ and $\mathcal{F}^*$ are linear and continuous then $\mathcal{F}^*\circ T_j \in (L_{-s}^{q}(\Omega))^{'},$ for $j=1,2.$
Note that, from \eqref{dual} we have the characterization of the dual space 
$(L_{-s}^{q}(\Omega))^{'}$, i.e., $$(L_{-s}^{q}(\Omega))^{'} = L_{s}^{p}(\Omega),$$
for all $-1+1/p<s<1/p$.
Therefore, there exists a unique $g_j\in L_{s}^{p}(\Omega)$ such that
\[
 (\mathcal{F}^*\circ T_j)(f_j) = {}_{L_{s}^{p}(\mathbb{R}^3)}\langle \tilde{g_j},\tilde{f_j} \rangle_{L_{-s}^{q}(\mathbb{R}^3)}, 
\]
where $\tilde{f_j}\in L_{-s}^{q}(\mathbb{R}^3)$ is any extension of $f_j$ satisfying $\mathcal{R}_{\Omega}\tilde{f_j} = f_j$ and $\tilde g$ is an extension of $g$ by zero outside $\Omega$ for $j= 1,2$. 
Now, define $g := (g_1,g_2) \in L_{s}^{p}(\Omega) \times L_{s}^{p}(\Omega).$ Therefore,
\[
 \mathcal{F}^*f = {}_{L_{s}^{p}(\mathbb{R}^3)}\langle \tilde{g_1},\tilde{f_1}\rangle_{L_{-s}^{q}(\mathbb{R}^3)} + {}_{L_{s}^{p}(\mathbb{R}^3)}\langle \tilde{g_2}, \tilde{f_2}\rangle_{L_{-s}^{q}(\mathbb{R}^3)}.
\]
Thus the natural mapping $$A : L_{s}^{p}(\Omega) \times L_{s}^{p}(\Omega) \rightarrow (L_{-s}^{q}(\Omega) \times L_{-s}^{q}(\Omega))^{'},$$ defined by
$$(g_1,g_2) \longmapsto \mathcal{F}^*,$$ is bounded linear and bijective. Hence by the open mapping theorem we conclude that $A$ is an isomorphism, i.e., there
exist $c_1,c_2>0$ such that
\[
 c_1 \|g\|_{L_{s}^{p}(\Omega) \times L_{s}^{p}(\Omega)}\leq \|\mathcal{F}^*\|_{(L_{-s}^{q}(\Omega)\times L_{-s}^{q}(\Omega))^{'}} \leq c_2 \|g\|_{L_{s}^{p}(\Omega) \times L_{s}^{p}(\Omega)}.
\]
\end{proof}
The following lemma describes the interpolation spaces by applying the complex interpolation method.
\begin{lemma}\label{InTERp}
 We have the following characterization
\[
 L_{s}^{p}(\Omega) = [L_{s_0}^{p_0}(\Omega), L_{s_1}^{p_1}(\Omega)]_{[\theta]},
\]
where $s=(1-\theta) s_0 + \theta s_1, \frac{1}{p} = \frac{1-\theta}{p_0} + \frac{\theta}{p_1}$ and $s_0\neq s_1, s_0, s_1\in \mathbb{R}, 1<p_0, p_1 <\infty$ 
with $0<\theta<1.$
\end{lemma}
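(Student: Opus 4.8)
The plan is to reduce the statement to the corresponding interpolation identity on the whole space $\mathbb{R}^3$ and then transfer it to the Lipschitz domain $\Omega$ by a retraction--coretraction argument. On $\mathbb{R}^3$ the Bessel potential spaces $L_s^p(\mathbb{R}^3) = \{(I-\Delta)^{-s/2}g : g \in L^p(\mathbb{R}^3)\}$ are known to form a complex interpolation scale, i.e.
\[
[L_{s_0}^{p_0}(\mathbb{R}^3), L_{s_1}^{p_1}(\mathbb{R}^3)]_{[\theta]} = L_s^p(\mathbb{R}^3),
\]
with $s = (1-\theta)s_0 + \theta s_1$ and $1/p = (1-\theta)/p_0 + \theta/p_1$; this rests on the analyticity of the operator family $(I-\Delta)^{z/2}$ together with Stein's interpolation theorem and is recorded in \cite{Bergh}. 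I would take this identity as the starting point, since it already encodes the exact relations among $s,p,\theta$ appearing in the statement.

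Next I would set up the retraction--coretraction pair between $\Omega$ and $\mathbb{R}^3$. Let $\mathcal{R}_{\Omega} : L_s^p(\mathbb{R}^3) \to L_s^p(\Omega)$ be the restriction map and let $E$ be an extension operator satisfying $\mathcal{R}_{\Omega}\circ E = \mathrm{id}$ on $L_s^p(\Omega)$. Provided the \emph{same} operator $E$ is bounded as a map $L_{s_i}^{p_i}(\Omega) \to L_{s_i}^{p_i}(\mathbb{R}^3)$ for both endpoints $i=0,1$, the abstract retraction theorem for the complex method (see \cite{Bergh}) yields
\[
[L_{s_0}^{p_0}(\Omega), L_{s_1}^{p_1}(\Omega)]_{[\theta]} = \mathcal{R}_{\Omega}\bigl([L_{s_0}^{p_0}(\mathbb{R}^3), L_{s_1}^{p_1}(\mathbb{R}^3)]_{[\theta]}\bigr).
\]
Inserting the whole-space identity and using the very definition of $L_s^p(\Omega)$ as the space of restrictions to $\Omega$ of elements of $L_s^p(\mathbb{R}^3)$, the right-hand side equals $\mathcal{R}_{\Omega}(L_s^p(\mathbb{R}^3)) = L_s^p(\Omega)$, which is the desired identity. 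The equivalence of the interpolation norm with the intrinsic norm of $L_s^p(\Omega)$ is then automatic: the retraction theorem produces a bounded bijection, and the open mapping theorem upgrades it to an isomorphism.

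The main obstacle is precisely the construction of a single extension operator $E$ that is bounded simultaneously on the two endpoint spaces, and ideally over the whole range $s\in\mathbb{R}$ allowed in the statement. Stein's universal extension operator handles all integer-order spaces $W^{m,p}(\Omega)$ at once, hence by interpolation the scale $L_s^p(\Omega)$ for $s \ge 0$; the regime of negative smoothness is reached by duality through the identification \eqref{dual}, while the intermediate range $-1+1/p<s<1/p$ is covered cleanly by the universal extension of \cite{HTRi}. Once such an $E$ is fixed, the relations $\mathcal{R}_\Omega E = \mathrm{id}$ and the boundedness of $\mathcal{R}_\Omega$ on each endpoint are immediate, so the only genuinely delicate point to check is this $(s,p)$-independent, simultaneous boundedness of the extension; everything else is formal application of the cited interpolation machinery.
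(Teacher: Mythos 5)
Your starting point is exactly the content of the paper's entire proof: the paper disposes of this lemma with the single citation ``See [\cite{Bergh}, Theorem 6.4.5]'', which is the whole-space identity $[L_{s_0}^{p_0}(\mathbb{R}^3),L_{s_1}^{p_1}(\mathbb{R}^3)]_{[\theta]}=L_{s}^{p}(\mathbb{R}^3)$. What you add --- the retraction--coretraction transfer from $\mathbb{R}^3$ to the Lipschitz domain $\Omega$ --- is precisely the step the paper leaves implicit (the cited theorem is stated on $\mathbb{R}^n$, not on domains), and your use of it is correct: given one extension operator $E$ with $\mathcal{R}_{\Omega}E=\mathrm{id}$ that is bounded at \emph{both} endpoints, the retract of the interpolated whole-space couple is the interpolated domain couple, and $\mathcal{R}_{\Omega}(L_{s}^{p}(\mathbb{R}^3))=L_{s}^{p}(\Omega)$ by the paper's definition of the domain spaces. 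So in substance you are supplying a more complete argument than the paper, along the route the citation presupposes.

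The one genuine weak spot is your construction of the single coretraction when negative smoothness is involved, which the lemma permits since $s_0,s_1$ range over all of $\mathbb{R}$. Stein's operator (extended by the $E\mathcal{R}_{\Omega}$-interpolation trick) covers $s\geq 0$ only; ``duality through \eqref{dual}'' identifies spaces but does not manufacture an operator $E$ with $\mathcal{R}_{\Omega}E=\mathrm{id}$ bounded on negative-order spaces; and the operator of \cite{HTRi} is extension by \emph{zero}, a different operator from Stein's, bounded only for $-1+1/p<s<1/p$. Since the retraction theorem requires one and the same $E$ at both endpoints, you cannot patch these three together: as written, your proof does not cover, say, $s_0=2$, $s_1=-2$. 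Two repairs are available. The general one is to invoke a universal extension operator in the sense of Rychkov, bounded on the Bessel-potential (Triebel--Lizorkin $F^{s}_{p,2}$) scale for every $s\in\mathbb{R}$ and $1<p<\infty$ on Lipschitz domains; that proves the lemma in the stated generality. The economical one is to note that every application of the lemma in this paper (Lemma \ref{interfinn}, Lemma \ref{ESIviee}, Proposition \ref{CONtracMaP}) takes both endpoints in $R_{\Omega}$, hence inside the strip $-1+1/p<s<1/p$, where the single zero-extension operator $E_{ext}$ of \cite{HTRi} serves as coretraction at both endpoints and your argument goes through verbatim.
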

\begin{proof}
 See [\cite{Bergh}, Theorem 6.4.5].
\end{proof}

As in \cite{MiT}, for $\Omega\subset\mathbb{R}^3$ an open set and if $1<p<\infty$ and $s\in\mathbb{R},$ we introduce the space 
\[
 H^{s,p}(\curl;\Omega) := \{u\in L_{s}^{p}(\Omega); \ \curl u\in L_{s}^{p}(\Omega)\},
\]
equipped with the natural graph norm
\begin{equation}\label{GRaph}
 \|u\|_{H^{s,p}(\curl;\Omega)} = \|u\|_{L_{s}^{p}(\Omega)} + \|\curl u\|_{L_{s}^{p}(\Omega)}.
\end{equation}
An equivalent norm to \eqref{GRaph} is given by 
\[
 \|u\|_{H^{s,p}(\curl;\Omega)} = \left(\|u\|_{L_{s}^{p}(\Omega)}^{p} + \|\curl u\|_{L_{s}^{p}(\Omega)}^{p}\right)^{{1/p}}.
\]
Under the second norm $H^{s,p}(\curl;\Omega)$ is a Banach space. 
To define the tangential trace along the boundary we need to discuss about the Besov spaces on the boundary. Following \cite{MiT}, we denote
by $L_{1}^{p}(\partial\Omega)$ the Sobolev space of functions in $L^p(\partial\Omega)$ with tangential gradients $\nabla_{\tan}$ $(\nabla_{\tan} := -\nu\wedge(\nu\wedge\nabla))$ in $L^p(\partial\Omega),$
for $1<p<\infty.$ Spaces with fractional smoothness can then be defined via complex interpolation, i.e.,
\[
 L_{\theta}^{p}(\partial\Omega) := [L^p(\partial\Omega),L_{1}^{p}(\partial\Omega)]_{[\theta]}, \ \ \forall \ 0<\theta<1, \ 1<p<\infty.
\]
We also set $$L_{-s}^{p}(\partial\Omega) := (L_{s}^{q}(\partial\Omega))^{'},$$ for all $0\leq s\leq 1, 1<p,q<\infty, {1/p}+{1/q}=1.$\\
On $\partial\Omega,$ the Besov spaces can then be introduced via real interpolation\footnote{More details about this space can be found in \cite{Bergh} and [\cite{Adams}, Chapter 7]. }, i.e.,
\[
 B_{s}^{p,q}(\partial\Omega) := (L^p(\partial\Omega),L_{1}^{p}(\partial\Omega))_{s,q}, \ \text{with}\ 0<s<1,1<p,q<\infty.
\]
Also, for $-1<s<0$ and $1<p,q<\infty$, we set 
\[
 B_{s}^{p,q}(\partial\Omega) := (B_{-s}^{p',q'}(\partial\Omega))^{'}, \ \ {1/p} + {1/q}=1, \ 1/p' + 1/q'=1. 
\]
Now, if $u\in H^{s,p}(\curl;\Omega)$ for some $p, s$ satisfying $1<p<\infty$ and $-1+{1/p}<s<{1/p}$ then we can define the tangential trace $\nu\wedge u\in B_{s-{1/p}}^{p,p}(\partial\Omega)$ by
\[
 \langle \nu\wedge u, \text{Tr} \varphi\rangle := \int_{\Omega}[\langle \curl u,\varphi\rangle - \langle u,\curl\varphi\rangle]dx,
\]
for any $\varphi \in L_{1-s}^{q}(\Omega), {1/p}+{1/q}=1.$ \\
Therefore, the space $H_{0}^{s,p}(\curl;\Omega)$ can be interpreted as
\[
 H_{0}^{s,p}(\curl;\Omega) :=\{u\in H^{s,p}(\curl;\Omega);\ \nu\wedge u=0 \ \text{on}\ \partial\Omega \}.
\]

\begin{lemma}\label{refle}
The space $H_{0}^{s,p}(\curl;\Omega)$ is reflexive, for all $1<p<\infty$ and $s\in (-1+1/p, 1/p)$.
\end{lemma}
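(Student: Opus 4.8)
The plan is to exhibit $H_{0}^{s,p}(\curl;\Omega)$ as a closed subspace of a reflexive Banach space, using the classical fact that a closed subspace of a reflexive space is itself reflexive. Concretely, I would build up the reflexivity in two stages: first realize $H^{s,p}(\curl;\Omega)$ as a closed subspace of the reflexive product $L_{s}^{p}(\Omega)\times L_{s}^{p}(\Omega)$, and then realize $H_{0}^{s,p}(\curl;\Omega)$ as the kernel of a bounded trace operator on $H^{s,p}(\curl;\Omega)$. For the reflexivity of the ground space, I note that for $s\in(-1+1/p,1/p)$ the duality \eqref{dual} gives $(L_{s}^{p}(\Omega))'=L_{-s}^{q}(\Omega)$ with $1/p+1/q=1$; since $-s\in(-1+1/q,1/q)$, the same formula applied to the pair $(q,-s)$ yields $(L_{-s}^{q}(\Omega))'=L_{s}^{p}(\Omega)$. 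Hence each factor $L_{s}^{p}(\Omega)$ is reflexive, and the finite product $L_{s}^{p}(\Omega)\times L_{s}^{p}(\Omega)$ is reflexive as well.

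Next I would introduce the map $J\colon H^{s,p}(\curl;\Omega)\to L_{s}^{p}(\Omega)\times L_{s}^{p}(\Omega)$, $J(u):=(u,\curl u)$. Equipping $H^{s,p}(\curl;\Omega)$ with the equivalent norm $\big(\|u\|_{L_{s}^{p}(\Omega)}^{p}+\|\curl u\|_{L_{s}^{p}(\Omega)}^{p}\big)^{1/p}$ makes $J$ an isometry onto its range, so it suffices to check that the range of $J$ is closed. If $(u_n,\curl u_n)\to(v,w)$ in $L_{s}^{p}(\Omega)\times L_{s}^{p}(\Omega)$, then $u_n\to v$ and $\curl u_n\to w$ in $L_{s}^{p}(\Omega)$, hence also in the sense of distributions; since $\curl$ is continuous on distributions, $\curl v=w$, so $(v,w)=J(v)$ lies in the range. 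Therefore $H^{s,p}(\curl;\Omega)$ is isometrically isomorphic to a closed subspace of a reflexive space, and is itself reflexive.

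Finally I would show that $H_{0}^{s,p}(\curl;\Omega)$ is closed in $H^{s,p}(\curl;\Omega)$. By its definition $\langle\nu\wedge u,\text{Tr}\,\varphi\rangle=\int_{\Omega}\big[\langle\curl u,\varphi\rangle-\langle u,\curl\varphi\rangle\big]\,dx$ for $\varphi\in L_{1-s}^{q}(\Omega)$, the tangential trace $u\mapsto\nu\wedge u$ is a bounded linear operator from $H^{s,p}(\curl;\Omega)$ into $B_{s-1/p}^{p,p}(\partial\Omega)$, so its kernel $H_{0}^{s,p}(\curl;\Omega)$ is a closed subspace of the reflexive space $H^{s,p}(\curl;\Omega)$; being closed in a reflexive space, it is reflexive. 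The only genuinely delicate points in this scheme are the closedness of the range of $J$ (equivalently, that $\curl$ is a closed operator in this fractional scale, which rests solely on the continuity of $\curl$ as a map of distributions and on $L_{s}^{p}(\Omega)$-convergence implying distributional convergence) and the boundedness of the tangential trace; both are structural consequences of the definitions recalled above and require no hard estimate.
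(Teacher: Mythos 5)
Your proof is correct, and its core is the same as the paper's: exhibit $H_{0}^{s,p}(\curl;\Omega)$ as (isometric to) a closed subspace of the reflexive product $L_{s}^{p}(\Omega)\times L_{s}^{p}(\Omega)$ and invoke the fact that closed subspaces inherit reflexivity. The differences are in execution, and they cut both ways. For the reflexivity of the product, the paper does not simply cite the duality \eqref{dual} twice as you do: it runs the canonical-embedding argument in full, using its representation result (Lemma \ref{Represation theo}) to show that every element of the bidual equals $J_h$ for some $h$. That extra work is precisely the verification that the identification of the product with its bidual is implemented by the canonical map $J$ --- which is what reflexivity literally demands, since being merely isomorphic to one's bidual does not suffice in general; your shortcut is legitimate because the identifications in \eqref{dual} are given by the natural pairings, but that is the point being used silently. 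Conversely, on the closedness issue your argument is more complete than the paper's: the paper defines $\mathcal{F}u=(u,\curl u)$ on $H_{0}^{s,p}(\curl;\Omega)$ and simply asserts that its range $\mathcal{W}$ is closed, whereas you prove closedness in two stages --- the distributional-continuity argument showing that $\curl$ is a closed operator (hence $H^{s,p}(\curl;\Omega)$ is reflexive), followed by realizing $H_{0}^{s,p}(\curl;\Omega)$ as the kernel of the tangential trace. For that last step you do not even need boundedness of the trace into $B_{s-1/p}^{p,p}(\partial\Omega)$: it suffices that for each fixed $\varphi\in L_{1-s}^{q}(\Omega)$ the functional $u\mapsto\int_{\Omega}\bigl[\langle\curl u,\varphi\rangle-\langle u,\curl\varphi\rangle\bigr]\,dx$ is continuous on $H^{s,p}(\curl;\Omega)$, which is immediate from the duality pairing; then $H_{0}^{s,p}(\curl;\Omega)$ is an intersection of kernels of continuous functionals, hence closed. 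So your route fills the gap the paper leaves implicit (closedness of $\mathcal{W}$, which in the paper's direct approach would also require passing the boundary condition to the limit) while economizing on the duality argument, at the cost of leaning on the naturality of \eqref{dual}.
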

\begin{proof}
Define  
\[
\mathcal{F} : H_{0}^{s,p}(\curl;\Omega) \rightarrow L_{s}^{p}(\Omega) \times L_{s}^{p}(\Omega)
\]
by 
\[
\mathcal{F} u :=(u, \curl u). 
\]
The operator $\mathcal{F}$ is bounded linear and isometric. We set $\mathcal{W}$ to be the range of the
operator $\mathcal{F}$, which is a closed subspace of $L_{s}^{p}(\Omega) \times L_{s}^{p}(\Omega)$.
Notice that, the operator 
\[
\mathcal{F} : H_{0}^{s,p}(\curl;\Omega) \rightarrow \mathcal{W}
\]
and its inverse are isometrically isomorphism. 
Since the closed subspace of a reflexive space is reflexive and the isometric isomorphism preserves reflexivity between the spaces, then the proof of the lemma will follow if we show that $L_{s}^{p}(\Omega) \times L_{s}^{p}(\Omega)$ is reflexive.

For $h \in L_{s}^{p}(\Omega) \times L_{s}^{p}(\Omega)$, we set
\[
J_h(F^*) := {}_{(L_{s}^{p}(\Omega) \times L_{s}^{p}(\Omega))^{'}}\langle F^*, h\rangle_{L_{s}^{p}(\Omega) \times L_{s}^{p}(\Omega)}.
\]
Let us define the usual canonical mapping 
\[
J : L_{s}^{p}(\Omega) \times L_{s}^{p}(\Omega) \rightarrow (L_{s}^{p}(\Omega) \times L_{s}^{p}(\Omega))^{''}
\]
by $J(h) = J_h$. Notice that $J$ is an isometry. Hence, to show that the Banach space $L_{s}^{p}(\Omega) \times L_{s}^{p}(\Omega)$ is reflexive, we need to prove that the canonical embedding $J$ is surjective.

Given $g \in L_{-s}^{q}(\Omega) \times L_{-s}^{q}(\Omega)$, we define
\[
\tau_q : L_{-s}^{q}(\Omega) \times L_{-s}^{q}(\Omega) \rightarrow (L_{s}^{p}(\Omega) \times L_{s}^{p}(\Omega))'
\]
by
\[
 (\tau_q g)(f) := {}_{L_{-s}^{q}(\Omega) \times L_{-s}^{q}(\Omega)}\langle g, f\rangle_{L_{s}^{p}(\Omega) \times L_{s}^{p}(\Omega)}
\]
for all $f$ in $L_{s}^{p}(\Omega) \times L_{s}^{p}(\Omega)$, where $p, q, s$ satisfy $1<p, q <\infty$,
$1/p + 1/q = 1$ and $-1+1/p <s <1/p$. Then by Lemma \ref{Represation theo}, the operator $\tau_q$ is an isomorphism.

Let us take $F^{**} \in (L_{s}^{p}(\Omega) \times L_{s}^{p}(\Omega))^{''}$. Also we take $\tau_q g$ as $F^{*}$. Then $F^{**}F^{*} = F^{**}\tau_q g$, i.e., 
\[
F^{**}\tau_q : L_{-s}^{q}(\Omega) \times L_{-s}^{q}(\Omega) \rightarrow \mathbb{R}
\]
is a continuous linear map. Hence, Lemma \ref{Represation theo} implies that there exists a unique $h \in L_{s}^{p}(\Omega) \times L_{s}^{p}(\Omega)$ such that
\[
(F^{**}\tau_q)(g) = {}_{L_{s}^{p}(\Omega) \times L_{s}^{p}(\Omega)}\langle h, g\rangle_{L_{-s}^{q}(\Omega) \times L_{-s}^{q}(\Omega)}.
\]
Therefore, 
\[
\begin{split}
F^{**}F^{*} 
&= (F^{*}\tau_q)(g) \\
& = {}_{L_{s}^{p}(\Omega) \times L_{s}^{p}(\Omega)}\langle h, g\rangle_{L_{-s}^{q}(\Omega) \times L_{-s}^{q}(\Omega)} \\
& =  {}_{L_{-s}^{q}(\Omega) \times L_{-s}^{q}(\Omega)}\langle g, h\rangle_{L_{s}^{p}(\Omega) \times L_{s}^{p}(\Omega)} \\
& = (\tau_q g)(h) \\
& =  {}_{(L_{s}^{p}(\Omega) \times L_{s}^{p}(\Omega))^{'}}\langle F^*, h\rangle_{L_{s}^{p}(\Omega) \times L_{s}^{p}(\Omega)} \\
& = J_h(F^{*})
\end{split}
\]
i.e., $F^{**} = J_h = J(h)$. Hence $J$ is surjective.
\end{proof}
We finish this section with the following lemma where we state a Kato-Ponce type inequality.
\begin{lemma}\label{KP}
Assume that $f\in L_{s}^{p}(\Omega)$ and $g\in W^{s,\infty}(\Omega),$ then $fg\in L_{s}^{p}(\Omega)$ with the following estimate
\[
\|fg\|_{L_{s}^{p}(\Omega)} \leq C \|f\|_{L_{s}^{p}(\Omega)} \|g\|_{W^{s,\infty}(\Omega)},
\]
where $C=C(s,p)>0$ for all $1<p<\infty$ and $s\geq 0.$
\end{lemma}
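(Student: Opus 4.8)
The plan is to establish the estimate first at the two ``integer'' endpoints $s=0$ and $s=1$, where it is elementary, and then to fill in the fractional range $0<s<1$ by complex interpolation, using the identifications already recorded in Section~\ref{defi}. The general case $s\ge 1$ then follows by combining the Leibniz rule for the integer part with the fractional estimate.

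\textbf{Endpoints.} For $s=0$ the claim is just H\"older's inequality, $\norm{fg}_{L^p(\Omega)}\le\norm{g}_{L^\infty(\Omega)}\norm{f}_{L^p(\Omega)}$. For $s=1$ one uses the product rule $\nabla(fg)=(\nabla f)\,g+f\,\nabla g$ together with the case $s=0$ applied to each summand, which gives $\norm{fg}_{W^{1,p}(\Omega)}\le C\norm{g}_{W^{1,\infty}(\Omega)}\norm{f}_{W^{1,p}(\Omega)}$ after invoking the identification $L_{1}^{p}(\Omega)=W^{1,p}(\Omega)$.

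\textbf{Fractional range.} For $0<s<1$ I would view the multiplication operator $M_g:f\mapsto fg$ through an analytic family and invoke Stein's complex interpolation theorem. Concretely, realize the fixed H\"older multiplier $g$ as the value at $z=s$ of an analytic family $\{g_z\}_{0\le\re z\le 1}$ for which $\sup_t\norm{g_{it}}_{L^\infty(\Omega)}$ and $\sup_t\norm{g_{1+it}}_{W^{1,\infty}(\Omega)}$ are both controlled by $\norm{g}_{W^{s,\infty}(\Omega)}$; this is the content of the interpolation identity $W^{s,\infty}(\Omega)=[L^\infty(\Omega),W^{1,\infty}(\Omega)]_{[s]}$. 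The family $T_z:=M_{g_z}$ is then bounded on $L^p(\Omega)$ on the line $\re z=0$ (by the $s=0$ step) and on $W^{1,p}(\Omega)$ on the line $\re z=1$ (by the $s=1$ step), with bounds $\lesssim\norm{g}_{W^{s,\infty}(\Omega)}$. Since $L_{s}^{p}(\Omega)=[L^p(\Omega),W^{1,p}(\Omega)]_{[s]}$, Stein interpolation yields the boundedness of $T_s=M_g$ on $L_{s}^{p}(\Omega)$ with operator norm $\lesssim\norm{g}_{W^{s,\infty}(\Omega)}$, which is precisely the asserted inequality. Equivalently, one may phrase this as a bilinear complex interpolation (Calder\'on's theorem) of the product map $(f,g)\mapsto fg$ between the two endpoint boundedness statements.

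\textbf{Main obstacle and conclusion.} The delicate point is concentrated entirely in the fractional step: one must justify the interpolation description of the H\"older multiplier space $W^{s,\infty}(\Omega)$ and, relatedly, produce the bounded analytic family $\{g_z\}$ with constants governed by $\norm{g}_{W^{s,\infty}(\Omega)}$ rather than by a product of an $L^\infty$ norm and a Lipschitz norm; the non-reflexivity of the $L^\infty$ endpoint makes the complex method here genuinely subtle, and it is exactly at this borderline (both factors carrying $s$ units of regularity, the target being the Bessel-potential space $L_{s}^{p}=F_{s}^{p,2}$) that the strength of a Kato--Ponce type argument is required. An alternative, perhaps more transparent, route for this step is to transfer the problem to $\mathbb{R}^3$: choose a near-optimal extension $\tilde f$ of $f$ and a H\"older-seminorm-preserving extension $\tilde g$ of $g$, so that $\tilde f\tilde g$ restricts to $fg$ and hence $\norm{fg}_{L_{s}^{p}(\Omega)}\le\norm{\tilde f\tilde g}_{L_{s}^{p}(\mathbb{R}^3)}$, and then invoke the fractional Leibniz inequality on $\mathbb{R}^3$ via a Littlewood--Paley paraproduct decomposition, the borderline paraproduct (high frequencies on $g$, low on $f$) being the term that must be treated through the almost-orthogonality of the dyadic pieces together with the identification of $\norm{\tilde g}_{W^{s,\infty}}$ with the $B_{s}^{\infty,\infty}$ scale. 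Finally, for $s=m+\sigma$ with $m\in\mathbb{N}$ and $0<\sigma<1$ one differentiates $m$ times by the Leibniz rule and applies the fractional estimate to the resulting products, and taking the infimum over extensions $\tilde f$ of $f$ produces the constant $C=C(s,p)$.
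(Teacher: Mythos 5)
Your two endpoint cases are fine, but the main route you propose for $0<s<1$ rests on the identity $W^{s,\infty}(\Omega)=[L^{\infty}(\Omega),W^{1,\infty}(\Omega)]_{[s]}$, and this identity is false, not merely left unproven. By the very construction of the complex method, $X_0\cap X_1$ is dense in $[X_0,X_1]_{[\theta]}$ (see [\cite{Bergh}, Theorem 4.2.2]); here $X_0\cap X_1=W^{1,\infty}(\Omega)$, so if your identity held, Lipschitz functions would be dense in $W^{s,\infty}(\Omega)=C^{0,s}$ in its own norm (set equality of two Banach spaces continuously embedded in $L^{\infty}$ forces equivalence of norms by the closed graph theorem). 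They are not: for $g(x)=|x-x_0|^{s}$ with $x_0\in\Omega$, every Lipschitz function $L$ satisfies $|g-L|_{C^{0,s}}\geq 1$, as one sees from the difference quotient against $y=x_0$ as $x\to x_0$; the closure of Lipschitz functions in $C^{0,s}$ is the little H\"older space, a proper closed subspace. Consequently, for such $g$ no admissible analytic family $\{g_z\}$ with your endpoint bounds can exist, and neither Stein interpolation nor Calder\'on's bilinear interpolation can be run. You correctly flag this as the delicate point, but flagging it does not repair it: the non-separable $L^\infty$ endpoint genuinely defeats the complex method here.

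Your fallback route is, in substance, the paper's actual proof: the paper extends $f,g$ to $\mathbb{R}^3$ and quotes the Kato--Ponce inequality of \cite{kato}, $\|J^s(fg)\|_{L^p}\leq C\left(\|f\|_{L^p}\|J^sg\|_{L^{\infty}}+\|J^sf\|_{L^p}\|g\|_{L^{\infty}}\right)$, and then converts $\|J^sg\|_{L^{\infty}}$ into $\|g\|_{W^{s,\infty}}$ via its inequality \eqref{eq2}; it does not reprove the Leibniz rule. (Your insistence on a H\"older-seminorm-preserving extension of $g$, rather than extension by zero, is actually more careful than the paper on that particular point.) However, your paraproduct sketch does not dispose of the borderline term, and this is where the entire difficulty of the lemma sits. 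With only $g\in B^{s}_{\infty,\infty}$ --- which is all a H\"older bound gives --- the pieces of the paraproduct $T_fg=\sum_k S_{k-1}f\,\Delta_k g$ satisfy $2^{ks}\|S_{k-1}f\,\Delta_kg\|_{L^p}\leq C\|f\|_{L^p}\|g\|_{B^{s}_{\infty,\infty}}$ and nothing better; almost-orthogonality of the dyadic pieces then places $T_fg$ in $B^{s}_{p,\infty}$, which strictly contains the target $L^{p}_{s}=F^{s}_{p,2}$, and the missing $\ell^2$-summability in $k$ cannot be recovered from these uniform-in-$k$ bounds. This is precisely why the Kato--Ponce estimate is stated with $\|J^sg\|_{L^{\infty}}$, i.e.\ with $g$ in the potential space $J^{-s}(L^{\infty})$, which is strictly smaller than $C^{0,s}=B^{s}_{\infty,\infty}$, and not with the H\"older seminorm. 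So as written your second route halts at exactly the embedding of $C^{0,s}$ into the multiplier class that the lemma turns on (the paper absorbs this step into \eqref{eq2}); a complete argument must either place $g$ in $J^{-s}(L^{\infty})$ or give a genuinely different treatment of that one paraproduct.
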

\begin{proof}
For $s=0,$ the proof is trivial and $C=1$. So, we consider the case $s>0$.
Let us first extend the functions $f$ and $g$ by zero outside $\Omega$. Following \cite{stein}, we define the Bessel potentials 
$\mathscr{J}_s$ by $\mathscr{J}_s = (I-\Delta)^{-s/2}$ for $s>0$ and recall that, 
\[
L_{s}^{p}(\mathbb{R}^3) = \mathscr{J}_s(L^p(\mathbb{R}^3)), \ \ 1\leq p\leq\infty, \ \ s\geq 0.
\]
Hence,
\[
\begin{split}
\|fg\|_{L_{s}^{p}(\Omega)} 
& = \|fg\|_{L_{s}^{p}(\mathbb{R}^3)} \\
& = \|(I-\Delta)^{s/2}(fg)\|_{L^p(\mathbb{R}^3)}. 
\end{split}
\]
Now we recall the Kato-Ponce inequality, see for instance in \cite{kato}, as 
\[
\|J^s(fg)\|_{L^p(\mathbb{R}^3)} \leq C [\|f\|_{L^p(\mathbb{R}^3)}\|J^sg\|_{L^{\infty}(\mathbb{R}^3)}
+\|J^sf\|_{L^p(\mathbb{R}^3)}\|g\|_{L^{\infty}(\mathbb{R}^3)}],
\]
where $s>0$, $1<p<\infty$ and $J^s := (I-\Delta)^{s/2}$ with the constant $C=C(s,p)>0.$
So, we obtain
\begin{equation}\label{eq1}
\|fg\|_{L_{s}^{p}(\Omega)} \leq C [\|f\|_{L^p(\mathbb{R}^3)}\|J^sg\|_{L^{\infty}(\mathbb{R}^3)}
+\|J^sf\|_{L^p(\mathbb{R}^3)}\|g\|_{L^{\infty}(\mathbb{R}^3)}].
\end{equation}
Since, $J^s$ is an isomorphism between $L_{s}^{p}(\mathbb{R}^3)$ and $L^p(\mathbb{R}^3)$ for $1\leq p\leq \infty$ and $s\in\mathbb{R}$, see [\cite{Bergh}, Theorem 6.2.7], then we have
\begin{equation}\label{eq2}
\|J^sg\|_{L^{\infty}(\mathbb{R}^3)} \leq C \|g\|_{W^{s,\infty}(\mathbb{R}^3)},
\end{equation}
where $C=C(s)>0.$
Also note that, for $s>0$, $L_{s}^{p}(\mathbb{R}^3)$ is a subspace of $L^p(\mathbb{R}^3)$, i.e., for any $f\in L_{s}^{p}(\mathbb{R}^3)$ we have
\begin{equation}\label{eq3}
\|f\|_{L^p(\mathbb{R}^3)} \leq C \|f\|_{L_{s}^{p}(\mathbb{R}^3)}.
\end{equation}
Combining \eqref{eq1}, \eqref{eq2} and \eqref{eq3}, we obtain
\[
\|fg\|_{L_{s}^{p}(\Omega)} \leq C\|f\|_{L_{s}^{p}(\mathbb{R}^3)} \|g\|_{W^{s,\infty}(\mathbb{R}^3)}
\]
i.e.,
\[
\|fg\|_{L_{s}^{p}(\Omega)} \leq C\|f\|_{L_{s}^{p}(\Omega)} \|g\|_{W^{s,\infty}(\Omega)},
\]
where $C=C(s,p)>0$.
\end{proof}

\end{section}

\section{Main result}\label{theosec}
We start the section by defining a region $R_{\Omega}$ as follows:
\[
 (s,{1/p})\in R_{\Omega} \Leftrightarrow 
\begin{cases}
0<\frac{1}{p}<1, \\
-1+\frac{1}{p}<s<\frac{1}{p},\\ 
\frac{2}{3}(1-\frac{1}{p_{\Omega}})< \frac{1}{p} - \frac{s}{3}<\frac{1}{3}(\frac{2}{p_{\Omega}}+1).
\end{cases}
\]
Remark that, $R_{\Omega}$ can be determined by the geometric character of the domain $\Omega.$
Here, $p_{\Omega}$ is the H{\"o}lder conjugate exponent of $q_{\Omega}$ and $q_{\Omega}$ is the supremum of all $q$ so that the Dirichlet 
and Neumann problem for the Laplace-Beltrami operator in $\Omega$ is well-posed in $W^{1,q}$ spaces. However, $1\leq p_{\Omega}<2$ when $\partial\Omega$
is Lipschitz regular and $p_{\Omega} = 1$ when $\partial\Omega \in C^1$. A more detailed explanation can be found in \cite{MiT}. In the next 
sections we use the notations $R_{\Omega}^{+}$ and $R_{\Omega}^{-}$ given by 
\[
R_{\Omega}^{+} := R_{\Omega}\cap\{(s,1/p); s\geq 0, 1<p<\infty\}
\]
and 
\[
R_{\Omega}^{-} := R_{\Omega}\cap\{(s,1/p); s<0, 1<p<\infty\}.
\]
Now, we are in a position to state the following theorem as our main result.
\begin{theorem}\label{pert}
Let $\Omega$ be a bounded and Lipschitz domain in $\mathbb{R}^3$. Let the coefficient $a$ be a $3 \times 3$ symmetric  matrix, with elements in $W^{s, \infty}(\Omega)$, satisfying the uniform ellipticity condition, i.e.,
there exist positive constants $m,  M$ such that
\begin{equation}\label{ellipticity-theorem}
m |\xi|^2 \leq a(x)\xi\cdot\overline{\xi} \leq M |\xi|^2,
\end{equation}
for all $\xi\in\mathbb{C}^3$ and almost every $x\in\Omega$ and having, if $s>0$, a bounded H{\"o}lder semi-norm with exponent $s$, i.e., there exists $\tilde M>0$ such that 
\footnote{In the case $s=0$, this condition is not needed. In all subsequent estimates, we can replace $\tilde{M}$ by $0$ in this case.}
\begin{equation}\label{Holder-regularity-bound}
|a|_{C^{0,s}}\leq \tilde M.
\end{equation}

Then for any $f\in (H_{0}^{-s,q}(\curl;\Omega))'$, the following problem
\begin{equation}\label{max3}
\begin{cases}
 \curl (a(x)\curl u) + k^2 u = f,\ \text{in}\ \Omega\\
 \nu\wedge u = 0,\ \text{on}\ \partial\Omega
\end{cases}
\end{equation}
has one and unique solution in $H_{0}^{s,p}(\curl;\Omega)$ for all $(s,{1/p})\in S \ (:= S^{+}\cup S^{-})$
where 
\begin{equation}\label{S}
S^{+} :=\bigcup_{(s_0,1/{p_0})\in R_{\Omega}^{+} }
\left \{
  \left(s,\frac{1}{p} \right)\in R_{\Omega}^{+}  ;
\begin{array}{l}
s=(1-\theta)s_0,\\
\frac{1}{p}=\frac{1-\theta}{p_0} +\frac{\theta}{2}, \mbox{ where } \theta \in (0, 1) \mbox{ is such that }\\ (1-\theta)\log \mathcal M_{s_0,p_0} +\log k_0(s,p) <0. 
\end{array}
\right \}
\end{equation}
with $$\mathcal{M}_{s_0,p_0} := \|\mathcal{K}_{s_0,p_0}^{-1}\|_{(H_{0}^{-s_0,q_0}(\curl;\Omega))' \rightarrow H_{0}^{s_0,p_0}(\curl;\Omega)}$$ 
under the condition that 
\begin{equation}\label{conditions-for-estimates}
k_0(s,p) := \max\{|1-\frac{mk^2}{M^2}|, C(s,p)[1-\frac{m^2}{M^2} + \frac{m\tilde M}{M^2}]\}<1.
\end{equation}
Here $\mathcal{K}_{s_0,p_0}u = \curl\curl u + u$ and $C(s,p)$ is the constant appearing in the Kato-Ponce inequality in Lemma \ref{KP}. 
The region $S^{-}$ is given by
\begin{equation}\label{negS}
S^{-} := \{(s,1/p)\in R_{\Omega}^{-} ; (-s,1/q)\in S^{+} \}.
\end{equation}

In addition, the solution satisfies the following estimate 
\[
 \|u\|_{L_{s}^{p}(\Omega)} + \|\curl u\|_{L_{s}^{p}(\Omega)} \leq C \|f\|_{(H_{0}^{-s,q}(\curl;\Omega))'}.
\]
\end{theorem}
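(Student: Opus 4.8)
The plan is to realize the variable-coefficient operator of \eqref{max3} as a small perturbation of the constant-coefficient operator $\mathcal{K}u=\curl\curl u+u$, which is an isomorphism $H_{0}^{s_0,p_0}(\curl;\Omega)\to(H_{0}^{-s_0,q_0}(\curl;\Omega))'$ on $R_{\Omega}^{+}$ by the results of Mitrea \cite{MiT}, and then to propagate invertibility off the base point by complex interpolation against the $L^2$ endpoint. Introduce the operator $\mathcal{A}\colon H_{0}^{s,p}(\curl;\Omega)\to(H_{0}^{-s,q}(\curl;\Omega))'$ associated with the weak form of \eqref{max3}, i.e. $\langle\mathcal{A}u,v\rangle=\int_{\Omega}a\curl u\cdot\curl v+k^2u\cdot v\,dx$. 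Fix the scalar $t:=m/M^2$ and write $t\mathcal{A}=\mathcal{K}-\mathcal{B}$, where $\mathcal{B}$ has curl-coefficient $I-ta$ and zeroth-order coefficient $1-tk^2$. This choice of $t$ is dictated by \eqref{conditions-for-estimates}: since $a$ is symmetric with spectrum in $[m,M]$, the field $I-ta$ is positive semidefinite with operator norm $\|I-ta\|_{L^{\infty}}\le 1-m^2/M^2$ and Hölder seminorm $t\,|a|_{C^{0,s}}\le m\tilde M/M^2$, while the zeroth-order coefficient is exactly $1-mk^2/M^2$.

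First I would estimate $\mathcal{B}$ directly at the target point $(s,1/p)$. Pairing $\mathcal{B}u$ against $v\in H_{0}^{-s,q}(\curl;\Omega)$ and using the duality $(L_{s}^{p}(\Omega))'=L_{-s}^{q}(\Omega)$ from \eqref{dual}, the curl-part is $\int(I-ta)\curl u\cdot\curl v$, bounded by $\|(I-ta)\curl u\|_{L_{s}^{p}}\,\|\curl v\|_{L_{-s}^{q}}$, and the zeroth part by $|1-tk^2|\,\|u\|_{L_{s}^{p}}\,\|v\|_{L_{-s}^{q}}$. Applying the Kato-Ponce inequality of Lemma \ref{KP} to the product $(I-ta)\curl u$ gives the curl bound $C(s,p)\,\|I-ta\|_{W^{s,\infty}}\,\|\curl u\|_{L_{s}^{p}}$. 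Factoring the larger coefficient out of the $\ell^p$ graph norm then yields $\|\mathcal{B}\|_{H_{0}^{s,p}(\curl)\to(H_{0}^{-s,q}(\curl))'}\le k_0(s,p)$, with $k_0(s,p)$ exactly as in \eqref{conditions-for-estimates}.

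Next I would control $\mathcal{K}^{-1}$ at $(s,1/p)$ by interpolation. At the base point $(s_0,1/p_0)\in R_{\Omega}^{+}$ one has $\|\mathcal{K}^{-1}\|=\mathcal{M}_{s_0,p_0}$ by \cite{MiT}, while at the $L^2$ endpoint $(0,1/2)$ the operator $\mathcal{K}$ is the Riesz isomorphism for the $H_{0}(\curl;\Omega)$ inner product, so $\mathcal{M}_{0,2}=1$. Identifying $H_{0}^{\sigma,r}(\curl;\Omega)$ isometrically with a closed subspace of $L_{\sigma}^{r}(\Omega)\times L_{\sigma}^{r}(\Omega)$ via $u\mapsto(u,\curl u)$ as in the proof of Lemma \ref{refle}, and invoking the complex interpolation of Lemma \ref{InTERp} for these scales and their duals, the point $(s,1/p)=((1-\theta)s_0,(1-\theta)/p_0+\theta/2)$ satisfies $\|\mathcal{K}^{-1}\|\le\mathcal{M}_{s_0,p_0}^{1-\theta}\mathcal{M}_{0,2}^{\theta}=\mathcal{M}_{s_0,p_0}^{1-\theta}$. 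Combining the two estimates gives $\|\mathcal{K}^{-1}\mathcal{B}\|\le\mathcal{M}_{s_0,p_0}^{1-\theta}\,k_0(s,p)$, which is strictly less than $1$ precisely under the defining inequality $(1-\theta)\log\mathcal{M}_{s_0,p_0}+\log k_0(s,p)<0$ of \eqref{S}. Hence $I-\mathcal{K}^{-1}\mathcal{B}$ is invertible by the Neumann series (equivalently, by the Banach fixed point theorem), so $t\mathcal{A}=\mathcal{K}(I-\mathcal{K}^{-1}\mathcal{B})$ and therefore $\mathcal{A}$ is an isomorphism, yielding existence, uniqueness, and the estimate $\|u\|_{L_{s}^{p}}+\|\curl u\|_{L_{s}^{p}}\le C\|f\|_{(H_{0}^{-s,q}(\curl))'}$ for all $(s,1/p)\in S^{+}$. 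The region $S^{-}$ then follows by transposition: since $a$ is symmetric, the adjoint of $\mathcal{A}$ has the same structure, so invertibility at $(-s,1/q)\in S^{+}$ transfers to $(s,1/p)$, which is exactly how $S^{-}$ is defined in \eqref{negS}.

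The main obstacle is the interpolation step for $\mathcal{K}^{-1}$: one must verify that $\{H_{0}^{s,p}(\curl;\Omega)\}$ and their duals $\{(H_{0}^{-s,q}(\curl;\Omega))'\}$ form genuine complex interpolation scales compatible with the embedding into $L_{s}^{p}(\Omega)\times L_{s}^{p}(\Omega)$, and that the base-point isomorphism of \cite{MiT} and the $L^2$ Riesz isomorphism constitute a compatible couple so that the logarithmic convexity $\|\mathcal{K}^{-1}\|\le\mathcal{M}_{s_0,p_0}^{1-\theta}\mathcal{M}_{0,2}^{\theta}$ is legitimate. Once this is in place, the perturbation bookkeeping and the Kato-Ponce estimate are routine.
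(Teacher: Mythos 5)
Your proposal follows the paper's own strategy essentially step by step: the same splitting $t\mathcal{A}=\mathcal{K}-\mathcal{B}$ with the same choice $t=m/M^{2}$, the same Kato--Ponce bound (Lemma \ref{KP}) producing exactly $k_{0}(s,p)$, the same interpolation of $\|\mathcal{K}^{-1}\|$ against the $L^{2}$ endpoint where $\mathcal{M}_{0,2}=1$, and the same symmetry-plus-reflexivity duality argument for $S^{-}$; inverting $I-\mathcal{K}^{-1}\mathcal{B}$ by Neumann series is just the linear-operator form of the paper's Banach fixed point argument for the affine map $\mathcal{Q}_{f}$. The problem is the step you yourself call ``the main obstacle'': the route you sketch through it would fail, and it is precisely here that the paper does something you have not reproduced.

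You justify $\|\mathcal{K}^{-1}\|\le\mathcal{M}_{s_{0},p_{0}}^{1-\theta}\mathcal{M}_{0,2}^{\theta}$ by identifying $H_{0}^{\sigma,r}(\curl;\Omega)$ with a closed subspace of $L_{\sigma}^{r}(\Omega)\times L_{\sigma}^{r}(\Omega)$ and then ``invoking the complex interpolation of Lemma \ref{InTERp} for these scales and their duals.'' But complex interpolation does not pass to closed subspaces: for closed subspaces $\mathcal{W}_{j}\subset Y_{j}$, the space $[\mathcal{W}_{0},\mathcal{W}_{1}]_{[\theta]}$ need not coincide, even up to equivalent norms, with the corresponding subspace of $[Y_{0},Y_{1}]_{[\theta]}$, and neither the paper nor \cite{MiT} establishes that $\{H_{0}^{s,p}(\curl;\Omega)\}$ is itself an interpolation scale. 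So the logarithmic convexity of $\mathcal{M}_{s,p}$ cannot simply be invoked; it has to be manufactured. The paper does this (Lemmas \ref{interfinn} and \ref{ESIviee}) by conjugating with the isometries: the operators $\mathcal{P}_{s,p}:=\mathcal{L}_{s,p}\mathcal{K}_{s,p}^{-1}\mathcal{L}_{-s,q}^{*}$ act on the \emph{full} product spaces $L_{s}^{p}(\Omega)\times L_{s}^{p}(\Omega)$, which genuinely interpolate (Lemma \ref{InTERp} together with [\cite{Bergh}, Theorem 4.1.2]), giving $\|\mathcal{P}_{s,p}\|\le\|\mathcal{P}_{s_{0},p_{0}}\|^{1-\theta}\|\mathcal{P}_{0,2}\|^{\theta}\le\mathcal{M}_{s_{0},p_{0}}^{1-\theta}\mathcal{M}_{0,2}^{\theta}$. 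One then still needs the reverse comparison $\mathcal{M}_{s,p}\le\|\mathcal{P}_{s,p}\|$, which is not formal: given $f\in(H_{0}^{-s,q}(\curl;\Omega))'$, the paper defines a functional $\mathcal{Z}$ on $\mathrm{Im}(\mathcal{L}_{-s,q})$ by $\langle\mathcal{Z},\mathcal{L}_{-s,q}v\rangle:=\langle f,v\rangle$, extends it by Hahn--Banach to all of $L_{-s}^{q}(\Omega)\times L_{-s}^{q}(\Omega)$ with the same norm, and observes that $\mathcal{L}_{-s,q}^{*}\mathcal{Z}=f$, hence $\mathcal{L}_{s,p}\mathcal{K}_{s,p}^{-1}f=\mathcal{P}_{s,p}\mathcal{Z}$ and $\|\mathcal{K}_{s,p}^{-1}f\|_{H_{0}^{s,p}(\curl;\Omega)}\le\|\mathcal{P}_{s,p}\|\,\|f\|_{(H_{0}^{-s,q}(\curl;\Omega))'}$. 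Without this conjugation-plus-Hahn--Banach device, the contraction condition $\mathcal{M}_{s_{0},p_{0}}^{1-\theta}k_{0}(s,p)<1$ that defines $S^{+}$ is unsupported, and your proof does not close.
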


\begin{figure}
\centering
\includegraphics[width=0.8\textwidth]{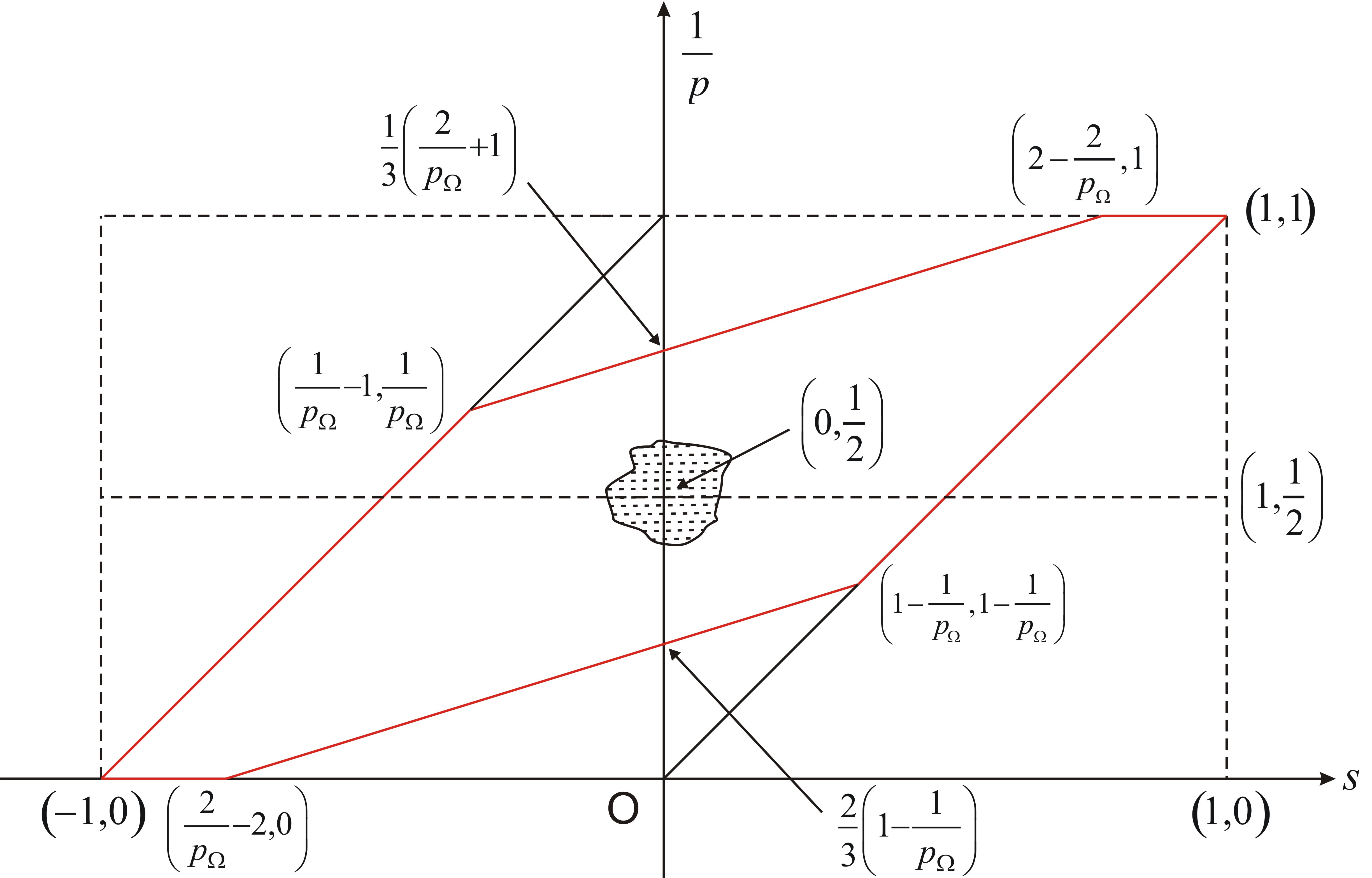}
\caption{The interior of the hexagon bounded by the red lines is the region $R_{\Omega}$ and the interior of 
the dashed region, denoted by $S$ and characterized in (\ref{S}) and \eqref{negS}, represents the well-posedness region for the Maxwell problem \eqref{max3}.}
\label{Fig 1}
\end{figure}

\bigskip

In Figure 1, considered in the $(s,1/p)$-plane (i.e., smoothness vs reciprocal integrability), the dashed area
represents the well-posedness region for the Maxwell problem \eqref{max3}. We first fix $(s_0,1/{p_0}) \in R_{\Omega}^{+}$. 
The property 
\begin{equation}\label{actual-condition}
(1-\theta)\log \mathcal M_{s_0,p_0} +\log k_0(s,p) <0
\end{equation}
says that the points $(s,1/p)$ will be laying on some part of this straight line joining $(0,1/2)$ and $(s_0,1/{p_0})$. 
Now, if we take any other point $(s_0,1/{p_0})\in R_{\Omega}^{+}$ and use the same argument, then we end up with the positive $s$-part 
of dashed region $S$ i.e., $S^{+}$, in Figure \ref{Fig 1}, where $\mathcal{M}_{s,p}k_{0}^{\frac{1}{p}} (s,p)<1$. 
The well-posedness region $S^{-}$ can be obtained by using duality argument on the Maxwell operator and the symmetry of the matrix $a$.

\bigskip

Theorem \ref{pert} is proved in two steps. In the first step, we deal with the unperturbed problem and in the second step, we discuss the perturbed problem.
In the unperturbed case, we consider the coefficient $a$ to be the identity matrix $I$. Then the system \eqref{max3} reduces to the well known time harmonic Maxwell
model with constant permittivity and permeability and the regularity for the solutions of this type of model has been derived in \cite{MiT}.  
In the perturbed case, we follow the approach by Gr{\"o}ger, see \cite{Groe}, based on the Banach fixed point theorem. 


\begin{remark}\label{general-case}
The result in Theorem \ref{pert} could be extended to obtain the well-posedness in $H_0^{s,p}(\curl,\Omega)$ for the following problem
\begin{equation}\label{max7}
\begin{cases}
 \curl (a(x)\curl u) + b(x) u = f,\ \text{in}\ \Omega\\
 \nu\wedge u = 0,\ \text{on}\ \partial\Omega
\end{cases}
\end{equation}
where $a \in (W^{s, \infty}(\Omega))^{3\times 3}$ satisfies the conditions (\ref{ellipticity-theorem}) and (\ref{Holder-regularity-bound}) and $ b \in (L^{\infty}(\Omega))^3$. 
One way to prove this is to show that the solution operator of this problem is a compact perturbation of the solution operator of the problem (\ref{max3})
and then use the Fredholm alternative, as it is done in \cite{Monk} for the case $s=0$ and $p=2$. 
\end{remark}
\begin{remark}\label{link-s-p-k}
We make the following observations.
 \begin{enumerate}
 
\item The regularity result Theorem \ref{pert} can be understood as follows. Let be given the bounds $m, M$ and 
 $\tilde{M}$. Assume, in addition, that $k^2$ is such that $\vert 1-\frac{m\; k^2}{M^2}\vert <1$. Then (\ref{max3}) is well posed in $H_{0}^{s,p}(\curl;\Omega)$ for $s$ and $p$,
 of the form $s=(1- \theta)s_0$ and 
 $\frac{1}{p}:=\frac{1-\theta}{p_0}+\frac{\theta}{2}$ with $(s_0, \frac{1}{p_0}) \in R^+_\Omega$, such that 
 \begin{equation}\label{sp--mM-1}
  C(s, p)( 1-\frac{ m^2}{M^2}+\frac{m \tilde{M}}{M^2})<1
 \end{equation}
 and 
 \begin{equation}\label{sp--mM-2}
  (1-\theta)\log \mathcal M_{s_0,p_0} +\log k_0(s,p) <0
 \end{equation}
 where 
 \begin{equation}\label{k-remainded}
  k_0(s,p) = \max\{|1-\frac{mk^2}{M^2}|, C(s,p)[1-\frac{m^2}{M^2} + \frac{m\tilde M}{M^2}]\}.
 \end{equation}
 
The extra condition on $k^2$, i.e., $\vert 1-\frac{m\; k^2}{M^2}\vert <1$, can be removed by combining this result and Remark \ref{general-case}. 

\item In the case $s=0$, we have $C(s, p)=1$. In addition, in this case, we can take $\tilde{M}=0$, i.e., we assume the elements of $a$ to be in $L^{\infty}(\Omega)$ only. Then the condition 
(\ref{sp--mM-1}) reduces to  $\vert 1-\frac{ m^2}{M^2}\vert <1$, which is trivially satisfied, and then the condition 
(\ref{sp--mM-2}) characterizes the range of $p$ for which we have well posedness.

 \item In the case where $a$ is a constant coefficient equal to the identity matrix we can take $\frac{m}{M}\rightarrow 1$ and $\tilde{M} \rightarrow 0$ and then  
 $C(s, p)( 1-\frac{ m^2}{M^2}+\frac{m \tilde{M}}{M^2})<<1$. This means that in the case of identity coefficient $a$, $S^+$ and $S^-$ become $R^+_\Omega$ and 
 $R^-_\Omega$ respectively and hence $S=R_\Omega$. This reduces to the result in \cite{MiT}.
 \end{enumerate}
\end{remark}

\section{Proof of the $H^{s,p}(\curl;\Omega)$ estimates for the Maxwell system}\label{proofsec}
We begin this section with the following lemma to characterize the dual space of $H_{0}^{-s,q}(\curl;\Omega)$ with an appropriate range of $s$ and $q$.
\begin{lemma}\label{repre}
 Assume that $\varphi\in (H_{0}^{-s,q}(\curl;\Omega))^{'},$ then $\varphi$ can be uniquely written as $\varphi = g_1 + \curl g_2,$
with the estimate
\[
 \|g_1\|_{L_{s}^{p}(\Omega)} + \|g_2\|_{L_{s}^{p}(\Omega)} \leq C \|\varphi\|_{(H_{0}^{-s,q}(\curl;\Omega))^{'}},
\]
where $g_1,g_2 \in L_{s}^{p}(\Omega),$ $1/p+1/q = 1,$ $1<p<\infty$ and $-1+1/p<s<1/p.$
\end{lemma}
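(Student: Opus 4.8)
The plan is to realize $(H_{0}^{-s,q}(\curl;\Omega))'$ through the isometric embedding already used in Lemma \ref{refle} and then invoke the representation result of Lemma \ref{Represation theo}. Using the same construction as in Lemma \ref{refle} but with $(-s,q)$ in place of $(s,p)$, the map $\mathcal{F}u := (u,\curl u)$ identifies $H_{0}^{-s,q}(\curl;\Omega)$ isometrically with a closed subspace $\mathcal{W}$ of the product space $X := L_{-s}^{q}(\Omega)\times L_{-s}^{q}(\Omega)$. Given $\varphi\in (H_{0}^{-s,q}(\curl;\Omega))'$, I would first transport it to a bounded functional $\widetilde\varphi := \varphi\circ\mathcal{F}^{-1}$ on $\mathcal{W}$ and then, by the Hahn--Banach theorem, extend it to a functional $\Phi$ on all of $X$ with $\|\Phi\|_{X'} = \|\widetilde\varphi\|_{\mathcal{W}'} = \|\varphi\|_{(H_{0}^{-s,q}(\curl;\Omega))'}$, the last equality because $\mathcal{F}$ is an isometry.

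Next I would apply Lemma \ref{Represation theo} verbatim: its functional lives on $L_{-s}^{q}(\Omega)\times L_{-s}^{q}(\Omega)$, and the hypotheses $1/p+1/q=1$ and $-1+1/p<s<1/p$ (equivalently $-1/q<s<1/p$) are exactly those under which $(L_{-s}^{q}(\Omega))' = L_{s}^{p}(\Omega)$. The lemma then produces a $g=(g_1,g_2)\in L_{s}^{p}(\Omega)\times L_{s}^{p}(\Omega)$ representing $\Phi$, namely $\Phi(v_1,v_2) = \langle \tilde g_1,\tilde v_1\rangle + \langle \tilde g_2,\tilde v_2\rangle$, together with the two-sided bound $c_1\|g\|_{L_{s}^{p}\times L_{s}^{p}}\le \|\Phi\|_{X'}\le c_2\|g\|_{L_{s}^{p}\times L_{s}^{p}}$. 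Combining its lower inequality with the Hahn--Banach norm identity gives $\|g_1\|_{L_{s}^{p}(\Omega)}+\|g_2\|_{L_{s}^{p}(\Omega)}\le C\|\varphi\|_{(H_{0}^{-s,q}(\curl;\Omega))'}$, which is the asserted estimate.

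It then remains to recognise the representation in the stated form. For $u\in H_{0}^{-s,q}(\curl;\Omega)$ one has $\varphi(u) = \Phi(\mathcal{F}u) = \langle g_1,u\rangle + \langle g_2,\curl u\rangle$, where the pairings are the $L_{s}^{p}$--$L_{-s}^{q}$ dualities. The second term is, by definition, the action of $\curl g_2$ on $u$: setting $\langle \curl g_2,u\rangle := \langle g_2,\curl u\rangle$ defines $\curl g_2\in (H_{0}^{-s,q}(\curl;\Omega))'$, and this weak curl is the correct distributional one precisely because $u$ has vanishing tangential trace, so that the Green formula for $\curl$ carries no boundary contribution (the boundary pairing is $(\nu\wedge g_2)\cdot u = -(\nu\wedge u)\cdot g_2 = 0$). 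Hence $\varphi = g_1 + \curl g_2$ as elements of $(H_{0}^{-s,q}(\curl;\Omega))'$.

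The delicate point, and the step I expect to demand the most care, is uniqueness. The Hahn--Banach extension $\Phi$ is not canonical, and the assignment $(g_1,g_2)\mapsto g_1+\curl g_2$, viewed as a functional on $H_{0}^{-s,q}(\curl;\Omega)$, is not injective on all of $L_{s}^{p}(\Omega)\times L_{s}^{p}(\Omega)$: any pair of the form $(\curl h,-h)$ with $h\in H^{s,p}(\curl;\Omega)$ yields the zero functional, again because the test field has vanishing tangential trace. Thus $g=(g_1,g_2)$ is determined only modulo the annihilator $\mathcal{W}^{\perp}$ of $\mathcal{W}$ in $X'$, i.e.\ modulo that kernel. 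Uniqueness should therefore be read, and established, at the level of the quotient $X'/\mathcal{W}^{\perp}\cong\mathcal{W}'\cong (H_{0}^{-s,q}(\curl;\Omega))'$: the class of $g$ is uniquely determined by $\varphi$, and this is what makes the induced map $\varphi\mapsto g_1+\curl g_2$ well defined and bounded with the stated constant. Pinning down the precise sense in which the decomposition is unique, and verifying that it suffices for the perturbation argument in Theorem \ref{pert}, is where I would concentrate the effort.
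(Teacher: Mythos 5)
Your proposal follows essentially the same route as the paper's own proof: the isometric embedding $u\mapsto(u,\curl u)$ onto a closed subspace $W$ of $L_{-s}^{q}(\Omega)\times L_{-s}^{q}(\Omega)$, a norm-preserving Hahn--Banach extension of the transported functional, Lemma \ref{Represation theo} to produce $(g_1,g_2)\in L_{s}^{p}(\Omega)\times L_{s}^{p}(\Omega)$ with the two-sided norm bound, and the identification $\langle g_2,\curl v\rangle=\langle\curl g_2,v\rangle$ to read off $\varphi=g_1+\curl g_2$. Existence and the estimate are established exactly as in the paper. One small caution: at this regularity the tangential trace of $g_2$ need not exist, so the Green-formula justification you sketch for the identity $\langle\curl g_2,v\rangle=\langle g_2,\curl v\rangle$ should be replaced by the density of $C_{0}^{\infty}(\Omega)$ in $H_{0}^{-s,q}(\curl;\Omega)$ (this is what the paper does, citing \cite{MiT}); the conclusion is the same.

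Your scruple about uniqueness is well founded, and in fact it cuts against the paper's own statement rather than against your argument. The paper's proof obtains uniqueness of $g$ only from Lemma \ref{Represation theo}, i.e.\ uniqueness relative to the \emph{chosen} Hahn--Banach extension $\tilde\varphi^{*}$; since that extension is not canonical, strict uniqueness of the pair $(g_1,g_2)$ does not follow, and indeed it fails: for any $h\in L_{s}^{p}(\Omega)$ with $\curl h\in L_{s}^{p}(\Omega)$ the pair $(-\curl h,\,h)$ represents the zero functional (take $h=\nabla\phi$ with $\phi$ smooth for a concrete example), by exactly the density argument above. So uniqueness holds only modulo this kernel, i.e.\ at the quotient level $X'/W^{\perp}\cong(H_{0}^{-s,q}(\curl;\Omega))'$, precisely as you say. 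This does no damage downstream: in Theorem \ref{imPTH} and the perturbation argument only the existence of \emph{some} decomposition together with the estimate $\|g_1\|_{L_{s}^{p}(\Omega)}+\|g_2\|_{L_{s}^{p}(\Omega)}\leq C\|\varphi\|_{(H_{0}^{-s,q}(\curl;\Omega))'}$ is used, and that part you have proved.
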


\begin{proof}
 The operator $$P : H_{0}^{-s,q}(\curl;\Omega) \rightarrow L_{-s}^{q}(\Omega)\times L_{-s}^{q}(\Omega),$$ defined by
$$Pu := (u, \curl u),$$ is linear, bounded and isometric. 
Also we define $W := P(H_{0}^{-s,q}(\curl;\Omega))$, which is a closed subspace of $L_{-s}^{q}(\Omega)\times L_{-s}^{q}(\Omega).$

Note that the adjoint operator $$P^{*} : W^{'} \rightarrow (H_{0}^{-s,q}(\curl;\Omega))^{'}$$ is invertible and continuous.
Hence for given $\varphi\in (H_{0}^{-s,q}(\curl;\Omega))^{'}$, there exists a unique $\varphi^*\in W'$ such that
$P^{*}\varphi^* = \varphi$.
Now, by Hahn-Banach extension theorem, there exists a linear functional (name it $\tilde\varphi^{*}$), $$\tilde\varphi^{*} : L_{-s}^{q}(\Omega)\times L_{-s}^{q}(\Omega) \rightarrow \mathbb{R}$$
such that 
\begin{equation}\label{Hahn}
_{(L_{-s}^{q}(\Omega)\times L_{-s}^{q}(\Omega))'}\langle \tilde\varphi^{*}, u\rangle_{L_{-s}^{q}(\Omega)\times L_{-s}^{q}(\Omega)}
= {}_{W'}\langle \varphi^{*}, u \rangle_W, \  \forall \ u\in W
\end{equation}
and 
$$\|\tilde\varphi^{*}\|_{(L_{-s}^{q}(\Omega)\times L_{-s}^{q}(\Omega))'} = \|\varphi^{*}\|_{W'}.$$

\bigskip

On the other hand, since $\tilde\varphi^{*} \in (L_{-s}^{q}(\Omega)\times L_{-s}^{q}(\Omega))'$, then by Lemma \ref{Represation theo}, for all $-1+1/p<s<1/p$,
there exists a unique $g :=(g_1, g_2) \in L_{s}^{p}(\Omega)\times L_{s}^{p}(\Omega)$ such that,
 \[
 \tilde\varphi^{*} f 
= {}_{L_{s}^{p}(\mathbb{R}^3)}\langle \tilde{g_1}, \tilde{f_1}\rangle_{L_{-s}^{q}(\mathbb{R}^3)} + {}_{L_{s}^{p}(\mathbb{R}^3)}\langle \tilde{g_2}, \tilde{f_2}\rangle_{L_{-s}^{q}(\mathbb{R}^3)},
 \]
for all $\tilde{f_1}, \tilde{f_2} \in L_{-s}^{q}(\mathbb{R}^3)$, 
 with the norm estimate
 \begin{equation}\label{Riesz}
 c_1 \|g\|_{L_{s}^{p}(\Omega) \times L_{s}^{p}(\Omega)}\leq \|\tilde\varphi^{*}\|_{(L_{-s}^{q}(\Omega)\times L_{-s}^{q}(\Omega))'} \leq c_2 \|g\|_{L_{s}^{p}(\Omega) \times L_{s}^{p}(\Omega)},
 \end{equation}
where $\tilde{f_j}$ is any extension of $f_j$ satisfying $\mathcal{R}_{\Omega}\tilde{f_j} = f_j,$ and $\tilde g$ be an extension of $g$ by zero outside $\Omega$ for all $j=1,2.$
Combining \eqref{Hahn} and \eqref{Riesz} we have 
$$\|\varphi^{*}\|_{W'} = \|\tilde\varphi^{*}\|_{(L_{-s}^{q}(\Omega)\times L_{-s}^{q}(\Omega))'} \geq c_1 \|g\|_{L_{s}^{p}(\Omega) \times L_{s}^{p}(\Omega)}.$$
Therefore, for $v\in H_{0}^{-s,q}(\curl;\Omega)$, we have
\begin{equation}\label{R1pre}
 \begin{split}
  {}_{(H_{0}^{-s,q}(\curl;\Omega))'}\langle \varphi, v\rangle_{H_{0}^{-s,q}(\curl;\Omega)} 
& = {}_{W'}\langle \varphi^{*}, Pv\rangle_W \\
& = {}_{(L_{-s}^{q}(\Omega)\times L_{-s}^{q}(\Omega))'}\langle \tilde\varphi^{*}, Pv\rangle_{L_{-s}^{q}(\Omega)\times L_{-s}^{q}(\Omega)}\\
& = {}_{L_{s}^{p}(\mathbb{R}^3)}\langle\tilde{g_1},\tilde{v}\rangle_{L_{-s}^{q}(\mathbb{R}^3)} + {}_{L_{s}^{p}(\mathbb{R}^3)}\langle \tilde{g_2},\widetilde{\curl v}\rangle_{L_{-s}^{q}(\mathbb{R}^3)},\\
\end{split}
\end{equation}
where, tilde denotes the extension by zero outside $\Omega$. 

Now we show that
\begin{equation}\label{integration by parts}
_{L_{s}^{p}(\Omega)}\langle g_2,\curl v \rangle_{L_{-s}^{q}(\Omega)} = {}_{(H_{0}^{-s,q}(\curl;\Omega))'}\langle \curl g_2, v \rangle_{H_{0}^{-s,q}(\curl;\Omega)}.
\end{equation}
Recall that, for $-1+1/p < s<1/p,$ $L_{s,0}^{p}(\Omega) = L_{s}^{p}(\Omega)$. As $g_2\in L_{s,0}^{p}(\Omega),$
then, $g_2$ is a distribution such that $g_2\in L_{s}^{p}(\mathbb{R}^3)$ with support in $\overline{\Omega}$. Using the distribution derivative
of $g_2$, we define $\curl g_2$ by
\[
_{(C_{0}^{\infty}(\Omega))'}\langle\curl g_2, v \rangle_{C_{0}^{\infty}(\Omega)} := {}_{(C_{0}^{\infty}(\Omega))'}\langle g_2,\curl v \rangle_{C_{0}^{\infty}(\Omega)} \ \forall \ v\in C_{0}^{\infty}(\Omega).
\]
It is clear that, $\curl g_2\in (C_{0}^{\infty}(\Omega))'$. Also recall that, $C_{0}^{\infty}(\Omega)$
is dense in $H_{0}^{-s,q}(\curl;\Omega),$ see [\cite{MiT}, A.27].
Therefore, we can define $\curl g_2$ on $H_{0}^{-s,q}(\curl;\Omega)$ by
\[
\langle\curl g_2,v\rangle := \lim_{m\rightarrow \infty}\langle\curl g_2,v_m\rangle, 
\]
where $v_m \rightarrow v$ in $H_{0}^{-s,q}(\curl;\Omega)$ with $v_m\in C_{0}^{\infty}(\Omega)$.
Hence $\curl g_2$ defines a bounded linear functional on $H_{0}^{-s,q}(\curl;\Omega)$ and we obtain
\begin{equation}\label{iingra}
\begin{split}
{}_{(H_{0}^{-s,q}(\curl;\Omega))'}\langle\curl g_2,v\rangle_{H_{0}^{-s,q}(\curl;\Omega)} 
& = \lim_{m\rightarrow \infty}{}_{(C_{0}^{\infty}(\Omega))'}\langle\curl g_2,v_m\rangle_{C_{0}^{\infty}(\Omega)} \\
& = \lim_{m\rightarrow \infty}{}_{(C_{0}^{\infty}(\Omega))'}\langle g_2,\curl v_m\rangle_{C_{0}^{\infty}(\Omega)} \\
& = {}_{L_{s}^{p}(\Omega)}\langle g_2,\curl v\rangle_{L_{-s}^{q}(\Omega)}. 
\end{split}
\end{equation}
Therefore, combining the equations \eqref{R1pre} and \eqref{iingra}, we obtain
\[
{}_{(H_{0}^{-s,q}(\curl;\Omega))'}\langle \varphi, v\rangle_{H_{0}^{-s,q}(\curl;\Omega)}
={}_{(H_{0}^{-s,q}(\curl;\Omega))'}\langle g_1 + \curl g_2, v\rangle_{H_{0}^{-s,q}(\curl;\Omega)}.
\]
Hence, for any $\varphi \in (H_{0}^{-s,q}(\curl;\Omega))'$, there exist unique $g_1\in L_{s}^{p}(\Omega)$ and $g_2\in L_{s}^{p}(\Omega)$ such that $\varphi$
has a representation $\varphi = g_1 + \curl g_2.$\\
In addition, we have the following estimate.
\[
 \begin{split}
  \|\varphi\|_{(H_{0}^{-s,q}(\curl;\Omega))'}
& = \sup_{\|u\|_{H_{0}^{-s,q}(\curl;\Omega)}\leq 1}|\langle \varphi, u\rangle| \\
& = \sup_{\|u\|_{H_{0}^{-s,q}(\curl;\Omega)}\leq 1} |{}_{W'}\langle \varphi^{*}, Pu\rangle_W| \\
& = \sup_{\|w\|_W\leq 1} |{}_{W'}\langle \varphi^{*}, w\rangle_W| \\
& = \|\varphi^{*}\|_{W'} = \|\tilde\varphi^{*}\|_{(L_{-s}^{q}(\Omega)\times L_{-s}^{q}(\Omega))'} \geq c_1 \|g\|_{L_{s}^{p}(\Omega)\times L_{s}^{p}(\Omega)}\\
& = c_1\left[\|g_1\|_{L_{s}^{p}(\Omega)}^{p} + \|g_2\|_{L_{s}^{p}(\Omega)}^{p}\right]^{1/p}\\
& \geq C \{\|g_1\|_{L_{s}^{p}(\Omega)} + \|g_2\|_{L_{s}^{p}(\Omega)}\},
 \end{split}
\]
i.e.,
\begin{equation}\label{estR}
 \|g_1\|_{L_{s}^{p}(\Omega)} + \|g_2\|_{L_{s}^{p}(\Omega)} \leq C \|\varphi\|_{(H_{0}^{-s,q}(\curl;\Omega))'}.
\end{equation}
\end{proof}

\subsection{Unperturbed problem}
\begin{theorem}\label{imPTH}
 Assume $\Omega$ to be a bounded Lipschitz domain.
 For a given $f\in (H_{0}^{-s,q}(\curl;\Omega))',$ there exists a unique $u\in H_{0}^{s,p}(\curl;\Omega)$  satisfying the following Maxwell problem
\begin{equation}\label{impMax}
 \begin{cases}
  \curl\curl u + u = f, \text{in}\ \Omega\\
\nu\wedge u = 0, \text{on}\ \partial\Omega,
 \end{cases}
\end{equation}
for all $(s,1/p)\in R_{\Omega}$ and ${1/p} + {1/q} = 1.$

In addition, we have the estimate
\[
 \|u\|_{L_{s}^{p}(\Omega)} + \|\curl u\|_{L_{s}^{p}(\Omega)} \leq C \|f\|_{(H_{0}^{-s,q}(\curl;\Omega))'},
\]
for all $(s,1/p)\in R_{\Omega}$ and ${1/p} + {1/q} = 1.$
\end{theorem}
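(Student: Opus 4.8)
The plan is to recognize that, for $a=I$, \eqref{impMax} is exactly the constant-coefficient time-harmonic Maxwell system $\mathcal{K}u := \curl\curl u + u = f$ with vanishing tangential trace, whose well-posedness in the scale $H_0^{s,p}(\curl;\Omega)$ was established by M. Mitrea in \cite{MiT}. Moreover the region $R_\Omega$ has been defined (through $p_\Omega$ and $q_\Omega$, i.e. the Lipschitz Laplace--Beltrami exponents) precisely so as to coincide with the range of $(s,1/p)$ in which that isomorphism holds. Thus the substance of the proof is to cast Mitrea's result in the dual-space language used here and then to propagate it across the symmetry line $s=0$ by duality.

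First I would dispose of the positive-smoothness case $(s,1/p)\in R_\Omega^+$. Given $f\in(H_0^{-s,q}(\curl;\Omega))'$, Lemma \ref{repre} yields a representation $f=g_1+\curl g_2$ with $g_1,g_2\in L_s^p(\Omega)$ and $\|g_1\|_{L_s^p(\Omega)}+\|g_2\|_{L_s^p(\Omega)}\leq C\|f\|_{(H_0^{-s,q}(\curl;\Omega))'}$. This exhibits $f$ through its action on both $v$ and $\curl v$, which is exactly the data structure appearing in the weak form $\int_\Omega(\curl u\cdot\curl v+u\cdot v)\,dx=\langle f,v\rangle$. Invoking \cite{MiT}, the operator $\mathcal{K}:H_0^{s,p}(\curl;\Omega)\to(H_0^{-s,q}(\curl;\Omega))'$ is an isomorphism throughout $R_\Omega^+$; this delivers existence and uniqueness of $u$ and, simultaneously, the stated estimate with $C$ the norm of $\mathcal{K}^{-1}$ composed with the representation constant of Lemma \ref{repre}.

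Next I would obtain the negative-smoothness case $(s,1/p)\in R_\Omega^-$ by transposition, using that $\mathcal{K}$ is formally self-adjoint: for fields with vanishing tangential trace one has $\int_\Omega(\curl\curl u+u)\cdot v\,dx=\int_\Omega u\cdot(\curl\curl v+v)\,dx$, the boundary contributions cancelling by the trace pairing of Section \ref{defi}. Since $R_\Omega$ is symmetric under $(s,1/p)\mapsto(-s,1/q)$, the condition $(s,1/p)\in R_\Omega^-$ is equivalent to $(-s,1/q)\in R_\Omega^+$, on which the previous step gives the isomorphism $\mathcal{K}:H_0^{-s,q}(\curl;\Omega)\to(H_0^{s,p}(\curl;\Omega))'$. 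Its Banach-space adjoint is again $\mathcal{K}$, now an isomorphism $H_0^{s,p}(\curl;\Omega)\to(H_0^{-s,q}(\curl;\Omega))'$; relabeling indices and transferring the bound under duality (using reflexivity of $H_0^{s,p}(\curl;\Omega)$ from Lemma \ref{refle}) yields the claim on $R_\Omega^-$.

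The main obstacle I expect is bookkeeping rather than a genuinely new estimate: one must verify carefully that the homogeneous tangential-trace condition encoded in $H_0^{\pm s,\cdot}(\curl;\Omega)$ is faithfully preserved both when translating to Mitrea's data convention and under the transposition, so that the integration-by-parts boundary term really vanishes in the fractional and negative ranges of $s$ (this is where the density of $C_0^\infty(\Omega)$ and the trace pairing recalled in Section \ref{defi} enter). Checking that the $R_\Omega$ defined here is literally the Mitrea well-posedness region, and that self-adjointness survives dualization across $s=0$, are the two points requiring the most care; everything else is a direct appeal to \cite{MiT}.
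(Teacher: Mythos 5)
Your overall strategy---decompose $f$ via Lemma \ref{repre}, cite \cite{MiT} for the constant-coefficient problem, then pass to negative smoothness by duality---is close in spirit to the paper's, but there is a genuine gap in your positive-smoothness step. After writing $f = g_1 + \curl g_2$, you invoke \cite{MiT} for the assertion that $\mathcal{K} := \curl\curl + I$ maps $H_{0}^{s,p}(\curl;\Omega)$ isomorphically onto $(H_{0}^{-s,q}(\curl;\Omega))'$ on $R_{\Omega}^{+}$. But that assertion \emph{is} the statement of Theorem \ref{imPTH} (restricted to $s\geq 0$); what \cite{MiT} actually supplies, and what the paper uses, is well-posedness of the \emph{first-order} Maxwell system with $L_{s}^{p}$ data. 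The missing idea is the reduction: setting $v := g_2 - \curl u$ (so that $u = g_1 + \curl v$), the second-order equation $\curl\curl u + u = g_1 + \curl g_2$ becomes the first-order system
\begin{equation*}
\curl u + v = g_2, \qquad \curl v - u = -g_1 \ \text{ in } \Omega, \qquad \nu\wedge u = 0 \ \text{ on } \partial\Omega,
\end{equation*}
which is precisely the problem proved in \cite{MiT} to have a unique solution $u\in H_{0}^{s,p}(\curl;\Omega)$ with
$\|u\|_{L_{s}^{p}(\Omega)} + \|\curl u\|_{L_{s}^{p}(\Omega)} \leq C\bigl(\|g_1\|_{L_{s}^{p}(\Omega)} + \|g_2\|_{L_{s}^{p}(\Omega)}\bigr)$.
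Without this substitution your decomposition does no work, and the appeal to \cite{MiT} is circular within the paper's framework: you are citing the conclusion rather than deriving it from the cited result.

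Once the reduction is in place, your case split and transposition become unnecessary: Mitrea's first-order result is stated for \emph{all} $(s,1/p)\in R_{\Omega}$, negative $s$ included, so the paper's argument covers the whole region in one stroke, with no duality. Your duality step for $R_{\Omega}^{-}$ is sound in itself---the symmetry of $R_{\Omega}$ under $(s,1/p)\mapsto(-s,1/q)$ holds, and the argument via reflexivity (Lemma \ref{refle}) and formal self-adjointness of $\mathcal{K}$ is exactly the mechanism the paper uses later for the perturbed operator on $S^{-}$---but here it only adds work, and it rests on the positive-smoothness case, which is where your gap lies.
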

\begin{proof}
 Since $f\in (H_{0}^{-s,q}(\curl;\Omega))'$, then from Lemma \ref{repre} there exist a unique $g_1\in L_{s}^{p}(\Omega)$
and $g_2 \in L_{s}^{p}(\Omega)$ such that $f = g_1 + \curl g_2$ with the estimate
\begin{equation}\label{esImp}
 \|g_1\|_{L_{s}^{p}(\Omega)} + \|g_2\|_{L_{s}^{p}(\Omega)} \leq C \|f\|_{(H_{0}^{-s,q}(\curl;\Omega))'}.
\end{equation}
Therefore the problem \eqref{impMax} can be viewed as 
\begin{equation}\label{impMaxx}
 \begin{cases}
  \curl\curl u + u = g_1 + \curl g_2, \text{in}\ \Omega\\
\nu\wedge u = 0, \text{on}\ \partial\Omega.
 \end{cases}
\end{equation}
Define, $u := g_1 + \curl v$ and $v := g_2 - \curl u.$ Then the \eqref{impMaxx} reduces to the following problem
\begin{equation}\label{impMaxxx}
 \begin{cases}
  \curl u + v = g_2,\ \text{in}\ \Omega \\
  \curl v - u = -g_1,\ \text{in}\ \Omega \\
  \nu\wedge u = 0\ \text{on}\ \partial\Omega. 
 \end{cases}
\end{equation}
In \cite{MiT}, it is shown that the problem \eqref{impMaxxx} is well posed, i.e., there exists a unique $u\in H_{0}^{s,p}(\curl;\Omega)$ satisfying the problem
\eqref{impMaxxx} together with the estimate
\begin{equation}\label{mit}
 \|u\|_{L_{s}^{p}(\Omega)} + \|\curl u\|_{L_{s}^{p}(\Omega)} \leq C \left(\|g_1\|_{L_{s}^{p}(\Omega)} + \|g_2\|_{L_{s}^{p}(\Omega)}\right),
\end{equation}
for all $(s,{1/p})\in R_{\Omega}$.
Finally, combining \eqref{esImp} and \eqref{mit}, we have the required estimate
\[
 \|u\|_{L_{s}^{p}(\Omega)} + \|\curl u\|_{L_{s}^{p}(\Omega)} \leq C \|f\|_{(H_{0}^{-s,q}(\curl;\Omega))'},
\]
for all $(s,{1/p})\in R_{\Omega}$ and ${1/p} + {1/q} = 1.$
\end{proof}
To deal with the case of the perturbed problem, we follow Gr{\"o}ger's approach, see \cite{Groe}.

\subsection{Perturbed problem}

Before proving Theorem \ref{pert}, we state and justify some intermediate lemmas. 
Define, $$\mathcal{L}_{s,p} : H_{0}^{s,p}(\curl;\Omega) \rightarrow L_{s}^{p}(\Omega)\times L_{s}^{p}(\Omega)$$ by
$$ \mathcal{L}_{s,p} u := \left
     (\begin{array}{c} 
               u \\ 
               \curl u
     \end{array}
      \right).$$
Remark that $\mathcal{L}_{s,p}$ is an isometry. Let us characterize its adjoint. Consider the functions $u\in (C_{0}^{\infty}(\mathbb{R}^3))^6$ and $v\in (C_{0}^{\infty}(\mathbb{R}^3))^6$, which are compactly supported in $\Omega.$ Also take $v$ of the form 
$v := \left
(\begin{array}{c} 
               a \\ 
               A
     \end{array}
      \right),$
then
\[
 \begin{split}
 & {}_{L_{s}^{p}(\Omega)\times L_{s}^{p}(\Omega)}\langle \mathcal{L}_{s,p}u, v\rangle_{L_{-s}^{q}(\Omega)\times L_{-s}^{q}(\Omega)} \\
& ={}_{L_{s}^{p}(\Omega)\times L_{s}^{p}(\Omega)}\langle \left
(\begin{array}{c} 
               u \\ 
               \curl u
     \end{array}
      \right),
\left
(\begin{array}{c} 
               a \\ 
               A
     \end{array}
      \right)
\rangle_{L_{-s}^{q}(\Omega)\times L_{-s}^{q}(\Omega)}  \\
& = {}_{L_{s}^{p}(\Omega)}\langle u, a\rangle_{L_{-s}^{q}(\Omega)} 
+ {}_{L_{s}^{p}(\Omega)}\langle \curl u, A\rangle_{L_{-s}^{q}(\Omega)} \\
& = {}_{H_{0}^{s,p}(\curl;\Omega)}\langle u, a\rangle_{(H_{0}^{s,p}(\curl;\Omega))'}
  + {}_{H_{0}^{s,p}(\curl;\Omega)}\langle u, \curl A\rangle_{(H_{0}^{s,p}(\curl;\Omega))'} \\
& = {}_{H_{0}^{s,p}(\curl;\Omega)}\langle u, a + \curl A\rangle_{(H_{0}^{s,p}(\curl;\Omega))'}\\
& = {}_{H_{0}^{s,p}(\curl;\Omega)}\langle u, \mathcal{L}_{s,p}^{*}v\rangle_{(H_{0}^{s,p}(\curl;\Omega))'}.
\end{split}
\]
Since $C_{0}^{\infty}(\Omega)$ is dense in $H_{0}^{s,p}(\curl;\Omega),$ so for any $u\in H_{0}^{s,p}(\curl;\Omega)$
 the equality 
\[
 {}_{L_{s}^{p}(\Omega)\times L_{s}^{p}(\Omega)}\langle \mathcal{L}_{s,p}u, v\rangle_{L_{-s}^{q}(\Omega)\times L_{-s}^{q}(\Omega)}
= {}_{H_{0}^{s,p}(\curl;\Omega)}\langle u, \mathcal{L}_{s,p}^{*}v\rangle_{(H_{0}^{s,p}(\curl;\Omega))'}.
\]
holds for all $v\in L_{-s}^{q}(\Omega)\times L_{-s}^{q}(\Omega).$
Therefore, the adjoint of $\mathcal{L}_{s,p}$ can be characterized as follows
\[
\mathcal{L}_{s,p}^{*} : L_{-s}^{q}(\Omega)\times L_{-s}^{q}(\Omega) \rightarrow (H_{0}^{s,p}(\curl;\Omega))'
\]
with
\[
 \mathcal{L}_{s,p}^{*}\left
(\begin{array}{c} 
               a \\ 
               A
     \end{array}
      \right)
= a + \curl A.
\] 
Similarly, we have $\mathcal{L}_{-s,q}^{*} : L_{s}^{p}(\Omega)\times L_{s}^{p}(\Omega) \rightarrow (H_{0}^{-s,q}(\curl;\Omega))'$
with $\mathcal{L}_{-s,q}^{*}\left
(\begin{array}{c} 
               a \\ 
               A
     \end{array}
      \right)
= a + \curl A.$
Finally, we define 
\[
\mathcal{K}_{s,p} := \mathcal{L}_{-s,q}^{*}\mathcal{L}_{s,p}.
\]
Therefore, $\mathcal{K}_{s,p}u = u + \curl\curl u.$
Hence, Theorem \ref{imPTH} ensures that $$\mathcal{K}_{s,p} : H_{0}^{s,p}(\curl;\Omega) \rightarrow (H_{0}^{-s,q}(\curl;\Omega))'$$
is an isomorphism, for all $(s,{1/p})\in R_{\Omega}.$ 

For all $t>0,$ we define the operator
\[
 \mathcal{B} : L_{s}^{p}(\Omega)\times L_{s}^{p}(\Omega) \rightarrow L_{s}^{p}(\Omega)\times L_{s}^{p}(\Omega)
\]
by 
\[
 \mathcal{B} \left
(\begin{array}{c} 
               A_1 \\ 
               A_2
     \end{array}
      \right)
: = \left
(\begin{array}{c} 
               A_1 - tk^2 A_1 \\ 
               A_2 - t a(\cdot)A_2
     \end{array}
      \right).
\]
Therefore, a simple calculation shows that
\[
 (\mathcal{L}_{-s,q}^{*}\mathcal{B}\mathcal{L}_{s,p}-\mathcal{K}_{s,p})u = -t[\curl(a(x)\curl u) + k^2u].
\]
\begin{lemma}\label{Lipschitz-continuity}
 The operator $\mathcal{B}$ is Lipschitz continuous for every fixed $(s,1/p) \in R_{\Omega}^{+}$.
\end{lemma}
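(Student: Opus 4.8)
The plan is to observe first that $\mathcal{B}$ is in fact a bounded \emph{linear} operator, so that establishing Lipschitz continuity reduces to proving boundedness and then passing to differences by linearity. Indeed, the first component $A_1 \mapsto A_1 - tk^2 A_1 = (1-tk^2)A_1$ is multiplication by a fixed scalar, and the second component $A_2 \mapsto A_2 - ta(\cdot)A_2 = (I - ta(\cdot))A_2$ is linear in $A_2$. Hence $\mathcal{B}A - \mathcal{B}B = \mathcal{B}(A-B)$ for any $A,B \in L_{s}^{p}(\Omega)\times L_{s}^{p}(\Omega)$, and it suffices to bound $\|\mathcal{B}C\|_{L_{s}^{p}(\Omega)\times L_{s}^{p}(\Omega)}$ by a fixed multiple of $\|C\|_{L_{s}^{p}(\Omega)\times L_{s}^{p}(\Omega)}$.

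The first component is handled trivially: $\|(1-tk^2)C_1\|_{L_{s}^{p}(\Omega)} = |1-tk^2|\,\|C_1\|_{L_{s}^{p}(\Omega)}$. The substantive step is the second component, where the matrix $a$ has entries in $W^{s,\infty}(\Omega)$ and one must show $a(\cdot)C_2 \in L_{s}^{p}(\Omega)$ with a quantitative bound. This is precisely the situation governed by the Kato--Ponce type inequality of Lemma \ref{KP}: writing $(aC_2)_i = \sum_j a_{ij}(C_2)_j$ and applying Lemma \ref{KP} entrywise, each product $a_{ij}(C_2)_j$ lies in $L_{s}^{p}(\Omega)$ and satisfies $\|a_{ij}(C_2)_j\|_{L_{s}^{p}(\Omega)} \leq C(s,p)\,\|(C_2)_j\|_{L_{s}^{p}(\Omega)}\,\|a_{ij}\|_{W^{s,\infty}(\Omega)}$. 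Summing over $j$ and combining the three vector components yields $\|a(\cdot)C_2\|_{L_{s}^{p}(\Omega)} \leq C(s,p)\,\|a\|_{W^{s,\infty}(\Omega)}\,\|C_2\|_{L_{s}^{p}(\Omega)}$, whence $\|(I-ta(\cdot))C_2\|_{L_{s}^{p}(\Omega)} \leq (1 + tC(s,p)\|a\|_{W^{s,\infty}(\Omega)})\,\|C_2\|_{L_{s}^{p}(\Omega)}$ by the triangle inequality.

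Finally I would assemble the two estimates in the product norm: setting $L := \max\{|1-tk^2|,\, 1 + tC(s,p)\|a\|_{W^{s,\infty}(\Omega)}\}$, the $p$-th power form of the equivalent product norm gives $\|\mathcal{B}C\|_{L_{s}^{p}(\Omega)\times L_{s}^{p}(\Omega)} \leq L\,\|C\|_{L_{s}^{p}(\Omega)\times L_{s}^{p}(\Omega)}$, and by linearity $\|\mathcal{B}A - \mathcal{B}B\| \leq L\,\|A-B\|$, which is the asserted Lipschitz continuity with constant $L$ (finite for each fixed $t>0$).

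The only point requiring genuine attention — the \emph{main obstacle}, though a mild one — is the applicability of Lemma \ref{KP}, which demands $s \geq 0$ and $1<p<\infty$. Both are guaranteed by the restriction $(s,1/p)\in R_{\Omega}^{+}$, since that region enforces $s \geq 0$ and $0 < 1/p < 1$; this is exactly why the lemma is stated only over $R_{\Omega}^{+}$ rather than all of $R_{\Omega}$.
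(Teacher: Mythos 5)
Your argument does prove the bare statement as written: $\mathcal{B}$ is linear, so Lipschitz continuity follows from boundedness, and boundedness of the second component follows from Lemma \ref{KP} applied entrywise. The structure (product norm, maximum of two component constants, Kato--Ponce for the multiplication by $a$) is the same as in the paper. However, there is one step where you diverge in a way that loses exactly the content this lemma exists to provide: you split $(I-ta(\cdot))C_2$ by the triangle inequality into $\|C_2\| + t\|a(\cdot)C_2\|$, obtaining the constant $1 + tC(s,p)\|a\|_{W^{s,\infty}(\Omega)}$, which is always $\geq 1$. The paper instead keeps $1-ta(\cdot)$ together as a single $W^{s,\infty}$ multiplier and estimates, using the ellipticity bounds $m \leq a \leq M$ and the H{\"o}lder semi-norm bound \eqref{Holder-regularity-bound},
\begin{equation*}
\|1-ta(\cdot)\|_{W^{s,\infty}(\Omega)} \leq |1-tm| + t\,|a|_{C^{0,s}} \leq |1-tm| + t\tilde M,
\end{equation*}
which exploits the cancellation between $I$ and $ta(\cdot)$ and yields the Lipschitz constant $k_0(s,p)$ of \eqref{kappa-0}. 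That constant can be made strictly less than $1$ by the choice $t=m/M^2$ under the hypotheses \eqref{condi1}--\eqref{condi2}, and this is precisely what Lemma \ref{matrii}, Proposition \ref{CONtracMaP}, and the fixed-point argument in Theorem \ref{pert} consume: the contraction property of $\mathcal{Q}_f$ is $\mathcal{M}_{s,p}k_0(s,p)<1$, with $\mathcal{M}_{s,p}\ge \mathcal{M}_{0,2}=1$ near $(0,1/2)$.

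So while your proof is formally valid for the qualitative assertion ``$\mathcal{B}$ is Lipschitz,'' your constant $L=\max\{|1-tk^2|,\,1+tC(s,p)\|a\|_{W^{s,\infty}(\Omega)}\}$ can never be $<1$, no matter how $t$ is chosen, and hence the perturbation argument built on this lemma would collapse. The missing idea is quantitative: do not separate $I$ from $ta(\cdot)$; apply Lemma \ref{KP} to the single coefficient $1-ta(\cdot)$ and then bound its $W^{s,\infty}$ norm via the lower ellipticity bound $m$ (giving $|1-tm|$, small for suitable $t$) and the H{\"o}lder bound $\tilde M$, rather than via the crude upper bound $\|a\|_{W^{s,\infty}(\Omega)}$.
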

\begin{proof}
\begin{align*}
&\|\mathcal{B}\left
(\begin{array}{c} 
               A_1 \\ 
               A_2
     \end{array}
      \right)
- \mathcal{B}\left
(\begin{array}{c} 
               B_1 \\ 
               B_2
     \end{array}
      \right)
\|_{L_{s}^{p}(\Omega)\times L_{s}^{p}(\Omega)} \\
 =& \|\left
(\begin{array}{c} 
               A_1 - tk^2A_1\\ 
               A_2 - ta(\cdot)A_2
     \end{array}
      \right)
- \left
(\begin{array}{c} 
               B_1 - tk^2B_1 \\ 
               B_2 - t a(\cdot) B_2
     \end{array}
      \right)
\|_{L_{s}^{p}(\Omega)\times L_{s}^{p}(\Omega)} \\
 =& \|\left
(\begin{array}{c} 
               (1-tk^2)(A_1-B_1)\\ 
               (1-ta(\cdot))(A_2-B_2)
     \end{array}
      \right)
\|_{L_{s}^{p}(\Omega)\times L_{s}^{p}(\Omega)} \\
 = & \left[\|(1-tk^2)(A_1-B_1)\|_{L_{s}^{p}(\Omega)}^{p} + \|(1-ta(x))(A_2-B_2)\|_{L_{s}^{p}(\Omega)}^{p}\right]^{{1/p}}\\
& (\text{Using Lemma \ref{KP}}) \\
 \leq & \left[|1-tk^2|^p\|(A_1-B_1)\|_{L_{s}^{p}(\Omega)}^{p}+ (C(s,p))^p\|1-ta(\cdot)\|_{W^{s,\infty}(\Omega)}^p\|(A_2-B_2)\|_{L_{s}^{p}(\Omega)}^{p}\}\right]^{{1/p}}.  \\
\end{align*}
We, now estimate the norm $\|1-ta(\cdot)\|_{W^{s,\infty}(\Omega)}$. Recall that, for $0<s<1$, we have, $$W^{s,\infty}(\Omega) = C^{0,s}(\overline\Omega)$$ with the norm
\[
\|\phi\|_{W^{s,\infty}(\Omega)}
= \|\phi\|_{L^{\infty}(\Omega)} + \sup_{\substack{
x, y\in\Omega \\
x\neq y}}
\frac{|\phi(x)-\phi(y)|}{|x-y|^s},
\]
where, $C^{0,s}(\overline\Omega)$ is the H{\"o}lder continuous space with exponent $s$. Hence
\[
\begin{split}
\|1-ta(\cdot)\|_{W^{s,\infty}(\Omega)} 
& =  \|1-ta(\cdot)\|_{L^{\infty}(\Omega)} + \sup_{\substack{
x, y\in\Omega \\
x\neq y}}
\frac{|(1-ta(x))-(1-ta(y)|}{|x-y|^s} \\
& =\sup_{x\in\Omega}|1-ta(x)| + t\sup_{\substack{
x, y\in\Omega \\
x\neq y}}
\frac{|a(x)-a(y)|}{|x-y|^s}.
\end{split}
\]
Since, $m\leq |a(x)|\leq M, \forall \ x\in\Omega$ and the H{\"o}lder semi-norm of $a$ is denoted by
\[
|a|_{C^{0,s}} :=\sup_{\substack{
x, y\in\Omega \\
x\neq y}}
\frac{|a(x)-a(y)|}{|x-y|^s},
 \]
then 
\[
\begin{split}
\|1-ta(\cdot)\|_{W^{s,\infty}(\Omega)}^p
& \leq [|1-tm| + t|a|_{C^{0,s}}]^p.
\end{split}
\]
Recall that, the coefficient $a$ is taken to be H{\"older} continuous with exponent $s$ in $\Omega$, i.e., there exists $\tilde M>0$ such that
$|a|_{C^{0,s}} \leq\tilde M$. Hence
\[
\begin{split}
\|\mathcal{B}\left
(\begin{array}{c} 
               A_1 \\ 
               A_2
     \end{array}
      \right)
- \mathcal{B}\left
(\begin{array}{c} 
               B_1 \\ 
               B_2
     \end{array}
      \right)
\|_{L_{s}^{p}(\Omega)\times L_{s}^{p}(\Omega)} 
\leq& \left[\max\{|1-tk^2|^p, (C(s,p))^p[|1-tm| + t{\tilde M}]^p\}\right]^{1/p} \\
 &\times \left[\|A_1-B_1\|_{L_{s}^{p}(\Omega)}^{p} + \|A_2 - B_2\|_{L_{s}^{p}(\Omega)}^{p}\right]^{1/p}.
\end{split}
\]
We set 
\begin{equation}\label{kappa-0}
k_0(s,p) := \max\{|1-tk^2|, C(s,p)[|1-tm| + t{\tilde M}]\},
\end{equation}
then we have
\begin{align*}
 \|\mathcal{B}\left
(\begin{array}{c} 
               A_1 \\ 
               A_2
     \end{array}
      \right)
- \mathcal{B}\left
(\begin{array}{c} 
               B_1 \\ 
               B_2
     \end{array}
      \right)
\|_{L_{s}^{p}(\Omega)\times L_{s}^{p}(\Omega)} 
& \leq k_0(s,p)
\| 
\left
(\begin{array}{c} 
               A_1 \\ 
               A_2
     \end{array}
      \right)
- \left
(\begin{array}{c} 
               B_1 \\ 
               B_2
     \end{array}
      \right)
\|_{L_{s}^{p}(\Omega)\times L_{s}^{p}(\Omega)}
\end{align*}
which means that $\mathcal{B}$ is Lipschitz with the Lipschitz constant $k_0(s,p)$.
\end{proof}
For all $(s,1/p)\in R_{\Omega},$ we define the operator $\mathcal{Q}_f$ as follows:
\[
\begin{split}
 \mathcal{Q}_f u : 
& = \mathcal{K}_{s,p}^{-1}\left(\mathcal{L}_{-s,q}^{*}\mathcal{B}\mathcal{L}_{s,p}u + tf\right)\\
& = u - t\mathcal{K}_{s,p}^{-1}\left[\{\curl(a(x)\curl u) + k^2u\}-f\right],
\end{split}
\]
where $u\in H_{0}^{s,p}(\curl;\Omega)$.
Our main aim is to show that $\mathcal{Q}_f$ is a contraction mapping, which is the key point to prove Theorem \ref{pert}.
\begin{notation}
For all $(s,1/p) \in R_{\Omega}$, we define $\mathcal{M}_{s,p}$ as follows
\begin{align*}
 \mathcal{M}_{s,p} 
  := \sup_{\substack{
u\in H_{0}^{s,p}(\curl;\Omega), \\
\|\mathcal{K}_{s,p}u\|_{(H_{0}^{-s,q}(\curl;\Omega))'}\leq 1
}}\|u\|_{H_{0}^{s,p}(\curl;\Omega)}.
\end{align*}
It is clear that
$
 \mathcal{M}_{s,p} = \|\mathcal{K}_{s,p}^{-1}\|_{(H_{0}^{-s,q}(\curl;\Omega))'\rightarrow H_{0}^{s,p}(\curl;\Omega)}.
$
\end{notation}
Let us first prove the following lemma.
\begin{lemma}\label{matrii}
 The operator $\mathcal{Q}_f : H_{0}^{s,p}(\curl;\Omega) \rightarrow H_{0}^{s,p}(\curl;\Omega)$ is Lipschitz with the Lipschitz constant $k_{0}(s,p) \mathcal{M}_{s,p}$ for all $(s,1/p) \in R_{\Omega}^{+}$.
\end{lemma}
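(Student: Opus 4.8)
The plan is to read $\mathcal{Q}_f$ as a composition of four maps whose operator norms have all been recorded, and to exploit that the additive term $tf$ does not depend on the argument, so it cancels in any difference. First I would fix $u,v\in H_{0}^{s,p}(\curl;\Omega)$ and write, straight from the definition of $\mathcal{Q}_f$,
\[
\mathcal{Q}_f u - \mathcal{Q}_f v = \mathcal{K}_{s,p}^{-1}\mathcal{L}_{-s,q}^{*}\bigl(\mathcal{B}\mathcal{L}_{s,p}u - \mathcal{B}\mathcal{L}_{s,p}v\bigr),
\]
the inhomogeneous contribution $\mathcal{K}_{s,p}^{-1}(tf)$ being identical for the two arguments and therefore dropping out.

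Next I would estimate the right-hand side factor by factor, reading the chain from the outside in. By the definition of $\mathcal{M}_{s,p}$ as the operator norm of $\mathcal{K}_{s,p}^{-1}$ from $(H_{0}^{-s,q}(\curl;\Omega))'$ to $H_{0}^{s,p}(\curl;\Omega)$, one gets
\[
\|\mathcal{Q}_f u - \mathcal{Q}_f v\|_{H_{0}^{s,p}(\curl;\Omega)} \leq \mathcal{M}_{s,p}\,\bigl\|\mathcal{L}_{-s,q}^{*}\bigl(\mathcal{B}\mathcal{L}_{s,p}u - \mathcal{B}\mathcal{L}_{s,p}v\bigr)\bigr\|_{(H_{0}^{-s,q}(\curl;\Omega))'}.
\]
The one intermediate point worth isolating is that $\mathcal{L}_{-s,q}^{*}$ is norm-nonincreasing: since $\mathcal{L}_{-s,q}$ is an isometry it has operator norm $1$, and an operator and its adjoint share the same norm, whence $\|\mathcal{L}_{-s,q}^{*}\|\leq 1$ as a map from $L_{s}^{p}(\Omega)\times L_{s}^{p}(\Omega)$ (identified with the dual of $L_{-s}^{q}(\Omega)\times L_{-s}^{q}(\Omega)$ via Lemma \ref{Represation theo}) into $(H_{0}^{-s,q}(\curl;\Omega))'$. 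This removes the $\mathcal{L}_{-s,q}^{*}$ and leaves the $L_{s}^{p}(\Omega)\times L_{s}^{p}(\Omega)$-norm of $\mathcal{B}\mathcal{L}_{s,p}u - \mathcal{B}\mathcal{L}_{s,p}v$.

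Finally I would invoke the two remaining facts already established: Lemma \ref{Lipschitz-continuity}, that $\mathcal{B}$ is Lipschitz on $L_{s}^{p}(\Omega)\times L_{s}^{p}(\Omega)$ with constant $k_{0}(s,p)$, and then the isometry property of $\mathcal{L}_{s,p}$, which turns $\|\mathcal{L}_{s,p}u - \mathcal{L}_{s,p}v\|_{L_{s}^{p}(\Omega)\times L_{s}^{p}(\Omega)}$ back into $\|u-v\|_{H_{0}^{s,p}(\curl;\Omega)}$. Chaining these yields
\[
\|\mathcal{Q}_f u - \mathcal{Q}_f v\|_{H_{0}^{s,p}(\curl;\Omega)} \leq \mathcal{M}_{s,p}\,k_{0}(s,p)\,\|u - v\|_{H_{0}^{s,p}(\curl;\Omega)},
\]
which is the asserted Lipschitz estimate. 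I expect no genuine obstacle beyond bookkeeping: the only delicate step is reading the duality pairings correctly so that $\mathcal{L}_{-s,q}^{*}$ acts between the right pair of spaces and the isometry/adjoint-norm identities apply verbatim. The hypothesis $(s,1/p)\in R_{\Omega}^{+}$ enters solely through Lemma \ref{Lipschitz-continuity} (where $0\leq s<1$ is needed for the Hölder characterization of $W^{s,\infty}(\Omega)$ and the Kato–Ponce bound of Lemma \ref{KP}) and through the isomorphism property of $\mathcal{K}_{s,p}$ guaranteed on $R_{\Omega}$ by Theorem \ref{imPTH}.
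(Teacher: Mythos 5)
Your proposal is correct and takes essentially the same route as the paper's proof: both reduce the difference $\mathcal{Q}_f u - \mathcal{Q}_f v$ (after cancellation of the $tf$ term) to the composition $\mathcal{K}_{s,p}^{-1}\mathcal{L}_{-s,q}^{*}\mathcal{B}\mathcal{L}_{s,p}$, bound $\mathcal{K}_{s,p}^{-1}$ by $\mathcal{M}_{s,p}$, use the Lipschitz constant $k_0(s,p)$ of $\mathcal{B}$ from Lemma \ref{Lipschitz-continuity}, and finish with the isometry of $\mathcal{L}_{s,p}$. The only cosmetic difference is that where you invoke the abstract identity $\|\mathcal{L}_{-s,q}^{*}\|=\|\mathcal{L}_{-s,q}\|\leq 1$ (under the duality identification of Lemma \ref{Represation theo}), the paper establishes the same bound by explicitly unwinding the duality pairing and applying H\"older's inequality together with the isometry of $\mathcal{L}_{-s,q}$.
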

\begin{proof}
\begin{align*}
&\|\mathcal{L}_{-s,q}^{*}\mathcal{B}\mathcal{L}_{s,p}\|_{(H_{0}^{-s,q}(\curl;\Omega))' \rightarrow H_{0}^{s,p}(\curl;\Omega)} \\
 =& \sup_{\|u\|_{H_{0}^{s,p}(\curl;\Omega)}\leq 1}\|\mathcal{L}_{-s,q}^{*}\mathcal{B}\mathcal{L}_{s,p}u\|_{(H_{0}^{-s,q}(\curl;\Omega))'}\\
 =& \sup_{\|u\|_{H_{0}^{s,p}(\curl;\Omega)}\leq 1}\sup_{\|v\|_{H_{0}^{-s,q}(\curl;\Omega)}\leq 1} {}_{(H_{0}^{-s,q}(\curl;\Omega))'}\langle \mathcal{L}_{-s,q}^{*}\mathcal{B}\mathcal{L}_{s,p}u,v\rangle_{H_{0}^{-s,q}(\curl;\Omega)}\\
 =& \sup_{\|u\|_{H_{0}^{s,p}(\curl;\Omega)}\leq 1}\sup_{\|v\|_{H_{0}^{-s,q}(\curl;\Omega)}\leq 1}{}_{L_{s}^{p}(\Omega)\times L_{s}^{p}(\Omega)}\langle \mathcal{B}\mathcal{L}_{s,p}u,\mathcal{L}_{-s,q}v\rangle_{L_{-s}^{q}(\Omega)\times L_{-s}^{q}(\Omega)}\\
 \leq & \sup_{\|u\|_{H_{0}^{s,p}(\curl;\Omega)}\leq 1}\sup_{\|v\|_{H_{0}^{-s,q}(\curl;\Omega)}\leq 1}\|\mathcal{B}\mathcal{L}_{s,p}u\|_{L_{s}^{p}(\Omega)\times L_{s}^{p}(\Omega)}\|\mathcal{L}_{-s,q}v\|_{L_{-s}^{q}(\Omega)\times L_{-s}^{q}(\Omega)}\\
 \leq & \sup_{\|u\|_{H_{0}^{s,p}(\curl;\Omega)}\leq 1}\|\mathcal{B}\mathcal{L}_{s,p}u\|_{L_{s}^{p}(\Omega)\times L_{s}^{p}(\Omega)}\; \text{(since $\mathcal{L}_{-s,q}$ is an isometry)}\ \\
 \leq & k_0(s, p)\sup_{\|u\|_{H_{0}^{s,p}(\curl;\Omega)}\leq 1} \Vert \mathcal{L}_{s,p}u \Vert_{H_{0}^{s,p}(\curl;\Omega)}\; \text{(Using Lemma \ref{Lipschitz-continuity})}\\
 \leq & k_{0}(s,p)\; \text{(since $\mathcal{L}_{s,p}$ is an isometry)}.
\end{align*}
Now
\[
\begin{split}
 \|\mathcal{K}_{s,p}^{-1}\mathcal{L}_{-s,q}^{*}\mathcal{B}\mathcal{L}_{s,p}\|_{(H_{0}^{-s,q}(\curl;\Omega))'\rightarrow H_{0}^{s,p}(\curl;\Omega)}
&\leq \|\mathcal{K}_{s,p}^{-1}\|\|\mathcal{L}_{-s,q}^{*}\mathcal{B}\mathcal{L}_{s,p}\|\\
&\leq \mathcal{M}_{s,p}k_0(s,p)
\end{split}
\]
and then, for $f\in (H_{0}^{-s,q}(\curl;\Omega))'$, we have
\[
\begin{split}
 \|\mathcal{Q}_fu-\mathcal{Q}_fv\|_{H_{0}^{s,p}(\curl;\Omega)}
& = \|\mathcal{K}_{s,p}^{-1}\mathcal{L}_{-s,q}^{*}\mathcal{B}\mathcal{L}_{s,p}(u-v)\|_{H_{0}^{s,p}(\curl;\Omega)}\\
& \leq \mathcal{M}_{s,p} k_0(s,p)\|u-v\|_{H_{0}^{s,p}(\curl;\Omega)}.
\end{split}
\]
Therefore, $\mathcal{Q}_f$ is a Lipschitz map from $H_{0}^{s,p}(\curl;\Omega)$
into itself with the Lipschitz constant $\mathcal{M}_{s,p} k_0(s,p)$ for $(s,1/p) \in R_{\Omega}^{+}$.
\end{proof}
For any fixed $(s_0,1/{p_0}), (s_1,1/{p_1})$ in $R_{\Omega}$,
we define the operators 
\[
\mathcal{P}_{s_0,p_0} : L_{s_0}^{p_0}(\Omega)\times L_{s_0}^{p_0}(\Omega) \rightarrow L_{s_0}^{p_0}(\Omega)\times L_{s_0}^{p_0}(\Omega)
\]
and
\[
\mathcal{P}_{s_1,p_1} : L_{s_1}^{p_1}(\Omega)\times L_{s_1}^{p_1}(\Omega) \rightarrow L_{s_1}^{p_1}(\Omega)\times L_{s_1}^{p_1}(\Omega)
\]
by
 $\mathcal{P}_{s_0,p_0} :=\mathcal{L}_{s_0,p_0}\mathcal{K}_{s_0,p_0}^{-1}\mathcal{L}_{-s_0,q_0}^{*}$
and $\mathcal{P}_{s_1,p_1} :=\mathcal{L}_{s_1,p_1}\mathcal{K}_{s_1,p_1}^{-1}\mathcal{L}_{-s_1,q_1}^{*}$, respectively. Observe that these operators are linear and bounded.

We state the following lemma, which is a consequence of the complex interpolation theorem.
\begin{lemma}\label{interfinn}
For any fixed $(s_0,1/{p_0}), (s_1,1/{p_1})\in R_{\Omega}$, the operators $\mathcal{P}_{s_0,p_0}$ and $\mathcal{P}_{s_1,p_1}$ are bounded. Then the operator 
\[
\mathcal{P}_{s,p} : L_{s}^{p}(\Omega)\times L_{s}^{p}(\Omega) \rightarrow L_{s}^{p}(\Omega)\times L_{s}^{p}(\Omega),
\]
defined by
\[
\mathcal{P}_{s,p} :=\mathcal{L}_{s,p}\mathcal{K}_{s,p}^{-1}\mathcal{L}_{-s,q}^{*},
\]
is bounded and satisfies the following estimate
\[
\|\mathcal{P}_{s,p}\| \leq \|\mathcal{P}_{s_0,p_0}\|^{1-\theta} \|\mathcal{P}_{s_1,p_1}\|^{\theta},
\]
for all $(s,1/p) \in R_{\Omega}$ and $s,p$ satisfying $s=(1-\theta)s_0+\theta s_1$,
$\frac{1}{p} = \frac{1-\theta}{p_0} + \frac{\theta}{p_1}$ with $0<\theta<1, s_0, s_1\in \mathbb{R}, s_0\neq s_1, 1<p_0, p_1<\infty$.
\end{lemma}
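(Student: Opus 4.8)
The plan is to recognize the desired estimate as a direct instance of Calder\'on's complex interpolation theorem, applied to a single operator whose restrictions to the two endpoint spaces are exactly $\mathcal{P}_{s_0,p_0}$ and $\mathcal{P}_{s_1,p_1}$. The first task is to identify the relevant interpolation space. By Lemma \ref{InTERp} we have $L_{s}^{p}(\Omega) = [L_{s_0}^{p_0}(\Omega), L_{s_1}^{p_1}(\Omega)]_{[\theta]}$ for the stated relations $s=(1-\theta)s_0+\theta s_1$ and $\frac{1}{p}=\frac{1-\theta}{p_0}+\frac{\theta}{p_1}$, and since the complex interpolation functor commutes with finite direct products, this yields
\[
L_{s}^{p}(\Omega)\times L_{s}^{p}(\Omega) = \left[\, L_{s_0}^{p_0}(\Omega)\times L_{s_0}^{p_0}(\Omega),\ L_{s_1}^{p_1}(\Omega)\times L_{s_1}^{p_1}(\Omega)\,\right]_{[\theta]}.
\]

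Next I would verify the consistency condition that permits viewing $\mathcal{P}_{s_0,p_0}$ and $\mathcal{P}_{s_1,p_1}$ as restrictions of one linear map $\mathcal{P}$ defined on the sum of the endpoint spaces. Given a pair $(a,A)$ in the intersection $(L_{s_0}^{p_0}(\Omega)\times L_{s_0}^{p_0}(\Omega))\cap(L_{s_1}^{p_1}(\Omega)\times L_{s_1}^{p_1}(\Omega))$, the element $a+\curl A$ is the same distribution regardless of the indices, and solving $\mathcal{K}u = a+\curl A$ with $\nu\wedge u=0$ produces, by the uniqueness part of Theorem \ref{imPTH}, the same function $u$ in both $H_{0}^{s_0,p_0}(\curl;\Omega)$ and $H_{0}^{s_1,p_1}(\curl;\Omega)$; applying $\mathcal{L}$ then returns the same pair $(u,\curl u)$. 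Hence $\mathcal{P}_{s_0,p_0}$ and $\mathcal{P}_{s_1,p_1}$ agree on the intersection and extend to a common operator $\mathcal{P}$ on the sum space.

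With these two ingredients in hand, the complex interpolation theorem for linear operators (see [\cite{Bergh}, Theorem 4.1.2]) applies directly: since $\mathcal{P}$ maps each endpoint space boundedly into itself with norms $\|\mathcal{P}_{s_0,p_0}\|$ and $\|\mathcal{P}_{s_1,p_1}\|$, it maps the interpolated space $L_{s}^{p}(\Omega)\times L_{s}^{p}(\Omega)$ boundedly into itself, and the restriction of $\mathcal{P}$ there coincides with $\mathcal{P}_{s,p}$, giving
\[
\|\mathcal{P}_{s,p}\| \leq \|\mathcal{P}_{s_0,p_0}\|^{1-\theta}\,\|\mathcal{P}_{s_1,p_1}\|^{\theta}.
\]

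I expect the genuine point of the argument to be the consistency verification rather than the interpolation machinery itself: one must ensure that the inverse operators $\mathcal{K}_{s_0,p_0}^{-1}$ and $\mathcal{K}_{s_1,p_1}^{-1}$ truly coincide on common data. This hinges on the observation that $\mathcal{K}$ is the single differential operator $u\mapsto u+\curl\curl u$ acting identically in every space, so that the density of $C_0^\infty(\Omega)$ in each $H_{0}^{s,p}(\curl;\Omega)$ together with the uniqueness statement of Theorem \ref{imPTH} forces the two solutions to agree. Once this is secured, the norm estimate follows from a routine appeal to Calder\'on's theorem.
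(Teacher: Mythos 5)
Your proposal is correct and follows essentially the same route as the paper, whose entire proof of Lemma \ref{interfinn} is the citation of Lemma \ref{InTERp} together with [\cite{Bergh}, Theorem 4.1.2]; you simply make explicit the two ingredients that citation suppresses, namely that complex interpolation passes to the product spaces $L_{s}^{p}(\Omega)\times L_{s}^{p}(\Omega)$ and that $\mathcal{P}_{s_0,p_0}$ and $\mathcal{P}_{s_1,p_1}$ are consistent (agree on the intersection), which is what allows a single operator to be interpolated. The one soft spot is that the uniqueness in Theorem \ref{imPTH} is uniqueness \emph{within each fixed space}, so concluding that the two solutions for common data coincide as distributions strictly needs a small additional argument (uniqueness in a common larger space, or consistency of the solution operator of \cite{MiT} by construction); this is precisely the standard fact the paper itself takes for granted, and you rightly flag it as the genuine content of the lemma.
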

\begin{proof}
The proof follows from Theorem \ref{InTERp} and [\cite{Bergh}, Theorem 4.1.2].
\end{proof}
Recall that our goal is to show the operator $\mathcal{Q}_f$ is a contraction map. For that, it is enough to show the Lipschitz constant $\mathcal{M}_{s,p}k_0(s,p)$ 
is strictly less than $1$. 
In order to do that we state the following two lemmas.
\begin{lemma}\label{ESIviee}
The operator $\mathcal{K}_{s,p}$ is bounded and invertible for all $(s,1/p) \in R_{\Omega}$.
Moreover, for any $(s_0,1/{p_0}), (s_1,1/{p_1}) \in R_{\Omega}$, we have the following estimate
\begin{equation}\label{InESS}
 \mathcal{M}_{s,p} \leq \mathcal{M}_{s_0,p_0}^{1-\theta} \mathcal{M}_{s_1,p_1}^{\theta},
\end{equation}
for all $(s,1/p) \in R_{\Omega}$ and $s,p$ satisfying $s=(1-\theta)s_0+\theta s_1$,
$\frac{1}{p} = \frac{1-\theta}{p_0} + \frac{\theta}{p_1}$ with $0<\theta<1.$
\end{lemma}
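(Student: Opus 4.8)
The plan is to reduce everything to the auxiliary operators $\mathcal{P}_{s,p}=\mathcal{L}_{s,p}\mathcal{K}_{s,p}^{-1}\mathcal{L}_{-s,q}^{*}$ introduced above, for which Lemma~\ref{interfinn} already supplies the desired multiplicative interpolation bound, and then to identify $\mathcal{M}_{s,p}$ with $\|\mathcal{P}_{s,p}\|$. Boundedness and invertibility of $\mathcal{K}_{s,p}$ on $R_{\Omega}$ are immediate: by construction $\mathcal{K}_{s,p}=\mathcal{L}_{-s,q}^{*}\mathcal{L}_{s,p}$ acts as $\mathcal{K}_{s,p}u=u+\curl\curl u$, so Theorem~\ref{imPTH} (well-posedness of the unperturbed problem) asserts precisely that $\mathcal{K}_{s,p}\colon H_{0}^{s,p}(\curl;\Omega)\to (H_{0}^{-s,q}(\curl;\Omega))'$ is an isomorphism for $(s,1/p)\in R_{\Omega}$. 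In particular $\mathcal{K}_{s,p}^{-1}$ is bounded and $\mathcal{M}_{s,p}=\|\mathcal{K}_{s,p}^{-1}\|$ is finite.

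The heart of the matter is the identity $\mathcal{M}_{s,p}=\|\mathcal{P}_{s,p}\|$. For the inequality $\|\mathcal{P}_{s,p}\|\le\mathcal{M}_{s,p}$, I would use that $\mathcal{L}_{s,p}$ is an isometry while $\mathcal{L}_{-s,q}^{*}$, being the adjoint of the isometry $\mathcal{L}_{-s,q}$, has norm at most one; hence for $g\in L_{s}^{p}(\Omega)\times L_{s}^{p}(\Omega)$,
\[
\|\mathcal{P}_{s,p}g\|_{L_{s}^{p}(\Omega)\times L_{s}^{p}(\Omega)}=\|\mathcal{K}_{s,p}^{-1}\mathcal{L}_{-s,q}^{*}g\|_{H_{0}^{s,p}(\curl;\Omega)}\le\mathcal{M}_{s,p}\|\mathcal{L}_{-s,q}^{*}g\|_{(H_{0}^{-s,q}(\curl;\Omega))'}\le\mathcal{M}_{s,p}\|g\|.
\]
For the reverse inequality, fix $\varphi\in(H_{0}^{-s,q}(\curl;\Omega))'$. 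By Lemma~\ref{repre} (the Hahn--Banach representation) one can write $\varphi=g_1+\curl g_2=\mathcal{L}_{-s,q}^{*}g$ with $g=(g_1,g_2)$ and, since that representation is norm preserving, with $\|g\|=\|\varphi\|$. Using again that $\mathcal{L}_{s,p}$ is an isometry,
\[
\|\mathcal{K}_{s,p}^{-1}\varphi\|_{H_{0}^{s,p}(\curl;\Omega)}=\|\mathcal{L}_{s,p}\mathcal{K}_{s,p}^{-1}\mathcal{L}_{-s,q}^{*}g\|=\|\mathcal{P}_{s,p}g\|\le\|\mathcal{P}_{s,p}\|\,\|g\|=\|\mathcal{P}_{s,p}\|\,\|\varphi\|,
\]
and taking the supremum over $\|\varphi\|\le 1$ yields $\mathcal{M}_{s,p}\le\|\mathcal{P}_{s,p}\|$.

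With the identity $\mathcal{M}_{s,p}=\|\mathcal{P}_{s,p}\|$ available at every admissible pair, and since $\mathcal{P}_{s,p}$ arises from $\mathcal{P}_{s_0,p_0}$ and $\mathcal{P}_{s_1,p_1}$ by complex interpolation along the segment $s=(1-\theta)s_0+\theta s_1$, $1/p=(1-\theta)/p_0+\theta/p_1$, Lemma~\ref{interfinn} gives
\[
\mathcal{M}_{s,p}=\|\mathcal{P}_{s,p}\|\le\|\mathcal{P}_{s_0,p_0}\|^{1-\theta}\|\mathcal{P}_{s_1,p_1}\|^{\theta}=\mathcal{M}_{s_0,p_0}^{1-\theta}\mathcal{M}_{s_1,p_1}^{\theta},
\]
which is exactly \eqref{InESS}. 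I expect the delicate point to be the \emph{exact} identity $\mathcal{M}_{s,p}=\|\mathcal{P}_{s,p}\|$ rather than a mere equivalence: it relies on the surjectivity of $\mathcal{L}_{-s,q}^{*}$ together with the fact that the functional representation of Lemma~\ref{repre} can be chosen norm preserving (equivalently, that the dual pairings of Lemma~\ref{Represation theo} are isometric for the chosen $\ell^{p}$/$\ell^{q}$ product norms). Were the identification only an equivalence, the argument would still run but would produce \eqref{InESS} with an extra multiplicative constant, which would then have to be tracked through the interpolation and would spoil the clean endpoint form of the estimate.
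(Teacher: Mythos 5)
Your overall architecture is the same as the paper's: reduce everything to $\mathcal{P}_{s,p}=\mathcal{L}_{s,p}\mathcal{K}_{s,p}^{-1}\mathcal{L}_{-s,q}^{*}$, obtain $\|\mathcal{P}_{s_i,p_i}\|\le\mathcal{M}_{s_i,p_i}$ at the endpoints from the isometry of $\mathcal{L}_{s,p}$ and the norm-one property of $\mathcal{L}_{-s,q}^{*}$, interpolate via Lemma \ref{interfinn}, and close the loop with $\mathcal{M}_{s,p}\le\|\mathcal{P}_{s,p}\|$. But the step you yourself flag as delicate is exactly where your argument has a genuine gap: you justify $\mathcal{M}_{s,p}\le\|\mathcal{P}_{s,p}\|$ by asserting that the representation $\varphi=g_1+\curl g_2=\mathcal{L}_{-s,q}^{*}g$ of Lemma \ref{repre} is ``norm preserving, with $\|g\|=\|\varphi\|$''. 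Lemma \ref{repre} gives no such thing: its conclusion is $\|g_1\|_{L_{s}^{p}(\Omega)}+\|g_2\|_{L_{s}^{p}(\Omega)}\le C\|\varphi\|$, where $C$ ultimately comes from the open-mapping-theorem constants $c_1,c_2$ of Lemma \ref{Represation theo}; the duality $(L_{-s}^{q}(\Omega))'=L_{s}^{p}(\Omega)$ used there is an isomorphism, not an isometry, so there is no reason to have $\|g\|=\|\varphi\|$. As you correctly anticipate, with only the equivalence your argument yields $\mathcal{M}_{s,p}\le C\,\mathcal{M}_{s_0,p_0}^{1-\theta}\mathcal{M}_{s_1,p_1}^{\theta}$, which is strictly weaker than \eqref{InESS} and is fatal downstream: Proposition \ref{CONtracMaP} needs the constant-free form together with $\mathcal{M}_{0,2}=1$ to characterize $S^{+}$ by the condition $(1-\theta)\log\mathcal{M}_{s_0,p_0}+\log k_0(s,p)<0$.

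The paper closes precisely this gap by a different device, never invoking Lemma \ref{repre} in this proof. Given $f\in(H_{0}^{-s,q}(\curl;\Omega))'$, it defines a functional $\mathcal{Z}$ only on the range of $\mathcal{L}_{-s,q}$ by $\langle\mathcal{Z},\mathcal{L}_{-s,q}v\rangle:=\langle f,v\rangle$; since $\mathcal{L}_{-s,q}$ is an isometry, this is well defined and $\|\mathcal{Z}\|=\|f\|_{(H_{0}^{-s,q}(\curl;\Omega))'}$ exactly. A Hahn--Banach extension to all of $L_{-s}^{q}(\Omega)\times L_{-s}^{q}(\Omega)$ preserves this norm, and by construction $\mathcal{L}_{-s,q}^{*}\mathcal{Z}=f$. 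Then $u:=\mathcal{K}_{s,p}^{-1}f$ satisfies $\mathcal{L}_{s,p}u=\mathcal{P}_{s,p}\mathcal{Z}$, hence $\|u\|_{H_{0}^{s,p}(\curl;\Omega)}\le\|\mathcal{P}_{s,p}\|\,\|f\|$, and taking the supremum over $\|f\|\le 1$ gives $\mathcal{M}_{s,p}\le\|\mathcal{P}_{s,p}\|$ with no constant, after which the interpolation bound finishes the proof exactly as you intended. The repair to your write-up is therefore: instead of routing through the representation lemma, whose constants you cannot control, construct the norm-exact preimage of $f$ under $\mathcal{L}_{-s,q}^{*}$ yourself via Hahn--Banach, starting from the isometry $\mathcal{L}_{-s,q}$.
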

\begin{proof}
The bounded invertibility of the operator $\mathcal{K}_{s,p}$ follows from Theorem \ref{imPTH}. It is enough to prove the estimate \eqref{InESS}.
Since the operators $\mathcal{P}_{s_0,p_0}$ and $\mathcal{P}_{s_1,p_1}$ are bounded and $\|\mathcal{L}_{s_0,p_0}\|=\|\mathcal{L}_{-s_0,q_0}\|=1$, then $\|\mathcal{P}_{s_0,p_0}\|\leq\mathcal{M}_{s_0,p_0}$ and $\|\mathcal{P}_{s_1,p_1}\|\leq\mathcal{M}_{s_1,p_1}$ for any fixed $(s_0,1/{p_0}), (s_1,1/{p_1})\in R_{\Omega}$.

Hence, applying Lemma \ref{interfinn}, we obtain that the operator 
\[
\mathcal{P}_{s,p} : L_{s}^{p}(\Omega)\times L_{s}^{p}(\Omega) \rightarrow L_{s}^{p}(\Omega)\times L_{s}^{p}(\Omega)
\]
is bounded with the estimate
\begin{align}
 \|\mathcal{P}_{s,p}\| 
& \leq \|\mathcal{P}_{s_0,p_0}\|^{1-\theta} \|\mathcal{P}_{s_1,p_1}\|^{\theta} \nonumber \\
&  \leq \mathcal{M}_{s_0,p_0}^{1-\theta} \mathcal{M}_{s_1,p_1}^{\theta}, \label{innnew}
\end{align}
for all $(s,1/p) \in R_{\Omega}$ and $s,p$ satisfying $s=(1-\theta)s_0+\theta s_1$,
$\frac{1}{p} = \frac{1-\theta}{p_0} + \frac{\theta}{p_1}$ with $0<\theta<1.$

Let us consider $f\in (H_{0}^{-s,q}(\curl;\Omega))'$, where $\frac{1}{p} + \frac{1}{q} = 1$.
Define a linear functional
\[
 \mathcal{Z} : Im(\mathcal{L}_{-s,q}) \subset L_{-s}^{q}(\Omega)\times L_{-s}^{q}(\Omega) \rightarrow \mathbb{R}
\]
by \[\langle \mathcal{Z}, \mathcal{L}_{-s,q}v\rangle : ={} _{(H_{0}^{-s,q}(\curl;\Omega))'}\langle f,v\rangle_{H_{0}^{-s,q}(\curl;\Omega)},\]
 for all $v \in H_{0}^{-s,q}(\curl;\Omega),$ where 
$$\mathcal{L}_{-s,q} : H_{0}^{-s,q}(\curl;\Omega) \rightarrow L_{-s}^{q}(\Omega)\times L_{-s}^{q}(\Omega).$$
Note that the above definition makes sense since $v$ is uniquely determined by $\mathcal{L}_{-s,q}.$
Now,
\[
 \begin{split}
  \|\mathcal{Z}\|
& = \sup_{
\substack{
v\in H_{0}^{-s,q}(\curl;\Omega)\\
v\neq 0
}} 
\frac{|\langle \mathcal{Z}, \mathcal{L}_{-s,q}v\rangle|}{\|\mathcal{L}_{-s,q}v\|_{L_{-s}^{q}(\Omega)\times {L_{-s}^{q}(\Omega)}}}\\
& = \sup_{
\substack{
v\in H_{0}^{-s,q}(\curl;\Omega)\\
v\neq 0
}} 
\frac{|\langle f, v\rangle|}{\|v\|_{H_{0}^{-s,q}(\curl;\Omega)}}\\
& = \|f\|_{(H_{0}^{-s,q}(\curl;\Omega))'}.
 \end{split}
\]
By Hahn-Banach extension theorem, $\mathcal{Z}$ can be extended to a continuous linear functional (again denoted by $\mathcal{Z}$) on 
$L_{-s}^{q}(\Omega)\times L_{-s}^{q}(\Omega)$ with the same norm $\|\mathcal{Z}\| = \|f\|_{(H_{0}^{-s,q}(\curl;\Omega))'}.$

Moreover, $\mathcal{L}_{-s,q}^{*}\mathcal{Z} = f$ because
\[
 \langle \mathcal{L}_{-s,q}^{*}\mathcal{Z}, v\rangle = \langle \mathcal{Z},\mathcal{L}_{-s,q}v\rangle = {}_{(H_{0}^{-s,q}(\curl;\Omega))'}\langle f,v\rangle_{H_{0}^{-s,q}(\curl;\Omega)}.
\]
Define, $u := \mathcal{K}_{s,p}^{-1} f$, where $f = \mathcal{L}_{-s,q}^{*}\mathcal{Z}$.
Therefore,
\[
 \begin{split}
  \mathcal{L}_{s,p}u 
&= \mathcal{L}_{s,p}\mathcal{K}_{s,p}^{-1}\mathcal{L}_{-s,q}^{*}\mathcal{Z}\\
& = \mathcal{P}_{s,p}\mathcal{Z}. 
 \end{split}
\]
Hence,
\[
\begin{split}
 \|u\|_{H_{0}^{s,p}(\curl;\Omega)}
& = \|\mathcal{L}_{s,p}u\|_{L_{s}^{p}(\Omega)\times L_{s}^{p}(\Omega)} \\
& = \|\mathcal{P}_{s,p}\mathcal{Z}\|_{L_{s}^{p}(\Omega)\times L_{s}^{p}(\Omega)}\\
& \leq \|\mathcal{P}_{s,p}\| \|\mathcal{Z}\|_{L_{s}^{p}(\Omega)\times L_{s}^{p}(\Omega)} \\
& (\text{using}\ \eqref{innnew}) \\
& \leq \mathcal{M}_{s_0,p_0}^{1-\theta} \mathcal{M}_{s_1,p_1}^{\theta} \|f\|_{(H_{0}^{-s,q}(\curl;\Omega))'}
\end{split}
\]
i.e., $\mathcal{M}_{s,p} = \sup_{\substack{\|f\|_{(H_{0}^{-s,q}(\curl;\Omega))'}\leq 1}}\|u\|_{H_{0}^{s,p}(\curl;\Omega)} \leq \mathcal{M}_{s_0,p_0}^{1-\theta} \mathcal{M}_{s_1,p_1}^{\theta}.$
\end{proof}
\begin{lemma}
We have 
\[
 \mathcal{M}_{0,2} = 1.
\]
\end{lemma}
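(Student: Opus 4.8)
The plan is to recognize $\mathcal{K}_{0,2}$ as the Riesz isomorphism of the Hilbert space $H_{0}^{0,2}(\curl;\Omega)=H_{0}(\curl;\Omega)$, from which the conclusion $\mathcal{M}_{0,2}=1$ is immediate. First I would record that when $s=0$ and $p=2$ the conjugate exponent is $q=2$, so that $H_{0}^{-s,q}(\curl;\Omega)=H_{0}^{0,2}(\curl;\Omega)$ is the familiar space $H_{0}(\curl;\Omega)$, a Hilbert space under the inner product
\[
(u,v):=\langle u,v\rangle_{L^{2}(\Omega)}+\langle \curl u,\curl v\rangle_{L^{2}(\Omega)},
\]
whose induced norm coincides with the graph norm $\|\cdot\|_{H^{0,2}(\curl;\Omega)}$ used throughout (this equality of norms is the single place where $p=2$ is essential). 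Correspondingly, the isometry $\mathcal{L}_{0,2}\colon H_{0}(\curl;\Omega)\to L^{2}(\Omega)\times L^{2}(\Omega)$, $\mathcal{L}_{0,2}u=(u,\curl u)$, now maps into a Hilbert space, which I identify self-dually via its own inner product.

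Next I would compute the duality pairing using the factorization $\mathcal{K}_{0,2}=\mathcal{L}_{0,2}^{*}\mathcal{L}_{0,2}$ established just before Lemma \ref{matrii}. For $u,v\in H_{0}(\curl;\Omega)$, the definition of the adjoint together with the fact that $\mathcal{L}_{0,2}$ is an isometry gives
\[
\langle \mathcal{K}_{0,2}u,v\rangle=\langle \mathcal{L}_{0,2}u,\mathcal{L}_{0,2}v\rangle_{L^{2}\times L^{2}}=\langle u,v\rangle_{L^{2}}+\langle \curl u,\curl v\rangle_{L^{2}}=(u,v).
\]
This exhibits $\mathcal{K}_{0,2}$ as exactly the canonical Riesz map $R\colon H_{0}(\curl;\Omega)\to (H_{0}(\curl;\Omega))'$ attached to the inner product $(\cdot,\cdot)$; one reads off the formula $\mathcal{K}_{0,2}u=u+\curl\curl u$ by the integration-by-parts identity $\langle\curl\curl u,v\rangle=\langle\curl u,\curl v\rangle$ valid for $u,v\in H_{0}(\curl;\Omega)$, but this explicit form is not even needed for the norm computation.

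Finally I would invoke the Riesz representation theorem: $R$ is an isometric isomorphism, so by Cauchy--Schwarz (the supremum being attained at $v=u/\|u\|$) one has $\|\mathcal{K}_{0,2}u\|_{(H_{0}(\curl;\Omega))'}=\sup_{\|v\|\le 1}|(u,v)|=\|u\|_{H_{0}(\curl;\Omega)}$ for every $u$. Hence $\mathcal{K}_{0,2}$, and therefore its inverse $\mathcal{K}_{0,2}^{-1}$, is an isometry, giving $\|\mathcal{K}_{0,2}^{-1}\|=1$. By the definition of $\mathcal{M}_{s,p}$ this is precisely $\mathcal{M}_{0,2}=1$. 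I do not expect any genuine obstacle: the only point demanding a moment of care is the self-dual identification of $L^{2}(\Omega)\times L^{2}(\Omega)$ and the verification that, at $p=2$, the graph norm agrees with the Hilbert norm, after which the statement is a direct application of the Riesz theorem.
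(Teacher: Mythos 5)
Your proof is correct, but it takes a cleaner, more unified route than the paper. The paper argues in two steps: first it tests the weak formulation of $\curl\curl u^f + u^f = f$ with $u^f$ itself to obtain $\|u^f\|_{H_0(\curl;\Omega)} \le \|f\|_{(H_0(\curl;\Omega))'}$, i.e.\ $\mathcal{M}_{0,2}\le 1$; then it shows, via Cauchy--Schwarz and Young's inequality, that every normalized $\tilde u$ satisfies $\|\mathcal{K}_{0,2}\tilde u\|_{(H_0(\curl;\Omega))'}\le 1$, which places $\tilde u$ in the admissible set defining $\mathcal{M}_{0,2}$ and hence gives $\mathcal{M}_{0,2}\ge 1$. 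Your identification of $\mathcal{K}_{0,2}$ with the Riesz map of the Hilbert space $H_0(\curl;\Omega)$ subsumes both steps at once: the exact isometry $\|\mathcal{K}_{0,2}u\|_{(H_0(\curl;\Omega))'} = \|u\|_{H_0(\curl;\Omega)}$ contains the paper's two inequalities simultaneously, and the Riesz representation theorem even re-proves surjectivity of $\mathcal{K}_{0,2}$ without appealing to Theorem \ref{imPTH}, on which the paper relies for invertibility. The underlying ingredients are the same in both arguments --- the factorization $\langle\mathcal{K}_{0,2}u,v\rangle = \langle u,v\rangle_{L^2(\Omega)}+\langle\curl u,\curl v\rangle_{L^2(\Omega)}$ and Cauchy--Schwarz, which is precisely where $p=2$ enters --- so what your version buys is conceptual economy and the sharper statement that $\mathcal{K}_{0,2}$ itself (not merely its inverse) has norm $1$. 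One minor point that both you and the paper gloss over: for complex-valued fields the duality pairing is bilinear rather than sesquilinear, so the supremum in the dual norm is attained at $v=\overline{u}/\|u\|$ rather than $u/\|u\|$; this does not affect the conclusion, and the paper's own pairings are written as real-valued.
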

\begin{proof}
 We prove this part in two steps.
\begin{Step}
In this step, we show that $\mathcal{M}_{0,2} \leq 1.$
\end{Step}
Recall that, $\mathcal{K}_{0,2}u^f := \curl\curl u^f + u^f = f$, where the operator 
$$\mathcal{K}_{0,2} : H_{0}(\curl;\Omega) \rightarrow (H_{0}(\curl;\Omega))',$$ 
is bounded and invertible, see Theorem \ref{imPTH}. Therefore,
\begin{equation}\label{m02}
 \begin{split}
  \mathcal{M}_{0,2}
 = \|\mathcal{K}_{0,2}^{-1}\| 
& = \sup_{\|f\|_{(H_{0}(\curl;\Omega))'}\leq 1}\|\mathcal{K}_{0,2}^{-1}f\|_{H_{0}(\curl;\Omega)}\\
& =  \sup_{\|f\|_{(H_{0}(\curl;\Omega))'}\leq 1} \|u^f\|_{H_{0}(\curl;\Omega)}.
\end{split}
\end{equation}
Note that, $u^f$ satisfies the following Maxwell problem
\begin{equation}\label{maXwEL}
 \curl\curl u^f + u^f = f
\end{equation}
in the weak sense, i.e., in particular, we have
\[
 \int_{\Omega}|\curl u^f|^2 + \int_{\Omega}|u^f|^2 = \int_{\Omega}f\cdot u^f.
\]
Hence, using H{\"o}lder inequality, we have
\begin{equation*}
\begin{split} 
\|u^f\|_{H_0(\curl;\Omega)}^{2} 
& = \int_{\Omega}|\curl u^f|^2 + \int_{\Omega}|u^f|^2\\
& \leq \|f\|_{(H_0(\curl;\Omega))'}\|u^f\|_{H_0(\curl; \Omega)},
\end{split}
\end{equation*}
i.e., 
\begin{equation}\label{mo3}
 \|u^f\|_{H_0(\curl;\Omega)} \leq \|f\|_{(H_0(\curl;\Omega))'}.
\end{equation}
Combining \eqref{m02} and \eqref{mo3}, we deduce that $\mathcal{M}_{0,2} \leq 1.$
\begin{Step}
 In this step, we prove that, there exists $\tilde u \in H_0(\curl;\Omega)$ with $\|\mathcal{K}_{0,2}\tilde u\|_{(H_0(\curl;\Omega))'}\leq 1$
such that $\|\tilde u\|_{H_0(\curl;\Omega)}=1.$
\end{Step}
Take $u_0 \in H_0(\curl;\Omega)$ such that $\|u_0\|\neq 0.$ Define $\tilde u := \frac{u_0}{\|u_0\|}.$
Therefore, $\|\tilde u\|_{H_0(\curl;\Omega)}=1.$
Also,
 \begin{align*}
  \|\mathcal{K}_{0,2}\tilde u\|_{(H_0(\curl;\Omega))'} 
 =& \sup_{\|v\|_{H_0(\curl;\Omega)}\leq 1} |{}_{(H_0(\curl;\Omega))'}\langle \mathcal{K}_{0,2}\tilde u, v\rangle_{H_0(\curl;\Omega)}|\\
 =& \sup_{\|v\|_{H_0(\curl;\Omega)}\leq 1}\left[\int_{\Omega}\curl\tilde u\cdot \curl v + \int_{\Omega}\tilde u\cdot v \right]\\
 \leq &\sup_{\|v\|_{H_0(\curl;\Omega)}\leq 1}\left[\|\curl \tilde u\|_{L^2(\Omega)}\|\curl v\|_{L^2(\Omega)} + \|\tilde u\|_{L^2(\Omega)}\|v\|_{L^2(\Omega)}\right]\\
 \leq& \sup_{\|v\|_{H_0(\curl;\Omega)}\leq 1}(\frac{1}{2}(\|\curl v\|_{L^2(\Omega)}^{2} + \|v\|_{L^2(\Omega)}^{2}) \\
&+ \frac{1}{2}(\|\curl\tilde u\|_{L^2(\Omega)}^{2} + \|\tilde u\|_{L^2(\Omega)}^{2}) )\\
 \leq& \frac{1}{2} + \frac{1}{2} = 1,
 \end{align*}
i.e., $\mathcal{M}_{0,2} = 1.$
\end{proof}
Now, we are in a position to prove that $\mathcal{Q}_f$ is a contraction map.
\begin{proposition}\label{CONtracMaP}
 The operator $\mathcal{Q}_f : H_{0}^{s,p}(\curl;\Omega) \rightarrow H_{0}^{s,p}(\curl;\Omega)$
is a contraction map, for all $(s,1/p)\in S^{+},$ where $S^{+}$ is defined in Theorem \ref{pert}. 
\end{proposition}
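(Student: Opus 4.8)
The plan is to reduce the whole statement to a single scalar inequality, namely $k_0(s,p)\,\mathcal{M}_{s,p}<1$, and then to feed the interpolation estimate of Lemma \ref{ESIviee} into it. Lemma \ref{matrii} already shows that $\mathcal{Q}_f$ maps $H_0^{s,p}(\curl;\Omega)$ into itself and is Lipschitz with constant $k_0(s,p)\,\mathcal{M}_{s,p}$ for every $(s,1/p)\in R_\Omega^+$, and this constant does not depend on $f$. Hence $\mathcal{Q}_f$ is a contraction precisely when this product is strictly smaller than $1$, and that is the only thing left to verify. First I would fix the free iteration parameter $t$ entering $\mathcal{B}$ to be $t=m/M^2$; with this choice the Lipschitz constant $k_0(s,p)=\max\{|1-tk^2|,\,C(s,p)[|1-tm|+t\tilde{M}]\}$ of \eqref{kappa-0} reduces, using $m\le M$, exactly to the quantity $\max\{|1-\tfrac{mk^2}{M^2}|,\,C(s,p)[1-\tfrac{m^2}{M^2}+\tfrac{m\tilde{M}}{M^2}]\}$ appearing in \eqref{conditions-for-estimates}. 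Thus the symbol $k_0(s,p)$ used below is the same one that enters the definition of $S^+$.

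Next I would unpack the definition of $S^+$. Given $(s,1/p)\in S^+$, by \eqref{S} there exist a base point $(s_0,1/p_0)\in R_\Omega^+$ and a parameter $\theta\in(0,1)$ with $s=(1-\theta)s_0$ and $\tfrac1p=\tfrac{1-\theta}{p_0}+\tfrac{\theta}{2}$, together with the admissibility inequality $(1-\theta)\log\mathcal{M}_{s_0,p_0}+\log k_0(s,p)<0$. The key observation is that these two linear relations identify $(s,1/p)$ as the complex interpolation point, with parameter $\theta$, between $(s_0,1/p_0)$ and the base endpoint $(s_1,1/p_1):=(0,1/2)$; one checks that $(0,1/2)\in R_\Omega$, so this endpoint is admissible for Lemma \ref{ESIviee}. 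Applying that lemma with these two endpoints gives
\[
\mathcal{M}_{s,p}\ \le\ \mathcal{M}_{s_0,p_0}^{\,1-\theta}\,\mathcal{M}_{0,2}^{\,\theta},
\]
and since the preceding lemma establishes $\mathcal{M}_{0,2}=1$, this collapses to $\mathcal{M}_{s,p}\le \mathcal{M}_{s_0,p_0}^{\,1-\theta}$.

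Finally I would exponentiate the admissibility inequality: $(1-\theta)\log\mathcal{M}_{s_0,p_0}+\log k_0(s,p)<0$ is equivalent to $k_0(s,p)\,\mathcal{M}_{s_0,p_0}^{\,1-\theta}<1$. Combining this with $\mathcal{M}_{s,p}\le \mathcal{M}_{s_0,p_0}^{\,1-\theta}$ and multiplying by $k_0(s,p)>0$ yields
\[
k_0(s,p)\,\mathcal{M}_{s,p}\ \le\ k_0(s,p)\,\mathcal{M}_{s_0,p_0}^{\,1-\theta}\ <\ 1,
\]
so by Lemma \ref{matrii} the map $\mathcal{Q}_f$ is a contraction on $H_0^{s,p}(\curl;\Omega)$ for every $(s,1/p)\in S^+$. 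The argument is essentially a bookkeeping assembly of the earlier lemmas, so there is no single hard analytic step; the points that most need care are the reconciliation of the two formulas for $k_0(s,p)$ through the choice $t=m/M^2$, and the recognition that the defining relations of $S^+$ are exactly the interpolation relations anchored at the special endpoint $(0,1/2)$ where $\mathcal{M}_{0,2}=1$. A minor technical check is that the interpolation remains legitimate in the degenerate case $s_0=0$, where the two endpoints share the same smoothness index and one appeals to the ordinary complex interpolation of the $L^p$ scale rather than the form with $s_0\neq s_1$.
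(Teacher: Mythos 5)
Your proposal is correct and follows essentially the same route as the paper's own proof: Lemma \ref{matrii} supplies the Lipschitz constant $k_0(s,p)\,\mathcal{M}_{s,p}$, the choice $t=m/M^2$ reconciles \eqref{kappa-0} with \eqref{conditions-for-estimates}, and the interpolation bound of Lemma \ref{ESIviee} against the endpoint $(0,1/2)$ together with $\mathcal{M}_{0,2}=1$ turns the defining inequality of $S^{+}$ into $k_0(s,p)\,\mathcal{M}_{s,p}<1$. Your closing remark about the degenerate case $s_0=0$ (where Lemma \ref{InTERp} formally requires $s_0\neq s_1$) is a legitimate point of care that the paper itself passes over silently.
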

\begin{proof}
From Lemma \ref{matrii} we have,
\[
 \|\mathcal{Q}_fu - \mathcal{Q}_fv\|_{H_{0}^{s,p}(\curl;\Omega)}
\leq \mathcal{M}_{s,p}k_{0}(s,p)\|u - v\|_{H_{0}^{s,p}(\curl;\Omega)}.
\]
To prove $\mathcal{Q}_f$ to be a contraction map, we need to show $\mathcal{M}_{s,p}k_{0}(s,p) <1, \ \forall \ (s,1/p) \in S^{+}.$ 
Now, fix any $(s_0,1/{p_0})\in R_{\Omega}^{+}$ and take a particular point $(0,1/2)$ in the region $R_{\Omega}^{+}$, then from Lemma \ref{ESIviee} we have the following estimate
\[
\mathcal{M}_{s,p} \leq \mathcal{M}_{s_0,p_0}^{1-\theta} \mathcal{M}_{0,2}^{\theta},
\]
where $s=(1-\theta)s_0, \frac{1}{p}=\frac{1-\theta}{p_0} + \frac{\theta}{2}$
and $0<\theta<1$. Therefore, $\mathcal{M}_{s,p}k_{0}(s,p) <1$ if we can show that
\begin{equation}\label{russss}
{\mathcal{M}}_{s_0,p_0}^{1-\theta} {\mathcal{M}}_{0,2}^{\theta} k_{0}(s,p) <1.
\end{equation}
Note that $\mathcal{M}_{0,2} =1,$ then passing $\log$ both sides of \eqref{russss} we have,
\begin{equation}\label{janeja}
(1-\theta)\log\mathcal{M}_{s_0,p_0} + \log k_0(s,p) < 0.
\end{equation}
Recall that
\begin{equation}\label{k-s-p}
k_0(s,p) =\max\{|1-tk^2|, C(s,p)[|1-tm| + t \tilde M]\}.
\end{equation}
We choose $t=\frac{m}{M^2}$. Then $k_0$ becomes
\[
k_0(s,p) =\max\left\{|1-\frac{mk^2}{M^2}|, C(s,p)\left[1-\frac{m^2}{M^2} + \frac{m\tilde M}{M^2}\right]\right\}.
\]
Now under the following conditions on $m,M,\tilde M, k^2>0$
\begin{equation}\label{condi1}
|1-\frac{mk^2}{M^2}| < 1
\end{equation}
and
\begin{equation}\label{condi2}
1-\frac{m^2}{M^2} + \frac{m\tilde M}{M^2} < \frac{1}{C(s,p)} 
\end{equation}
we obtain $k_0(s,p) <1$ for all $R_{\Omega}^{+}$.  
So, $\mathcal{M}_{s,p}k_{0}(s,p) <1$ if $(s,1/p)$ satisfies the following properties:
\begin{enumerate}
\item[(i)]
$(1-\theta)\log\mathcal{M}_{s_0,p_0} +\log k_0(s,p) < 0.$
\item[(ii)]
$s=(1-\theta)s_0.$
\item[(iii)]
$\frac{1}{p} = \frac{1-\theta}{p_0} + \frac{\theta}{2}$ with $0<\theta<1$,
\end{enumerate}
for $(s_0,1/{p_0})\in R_{\Omega}^{+}$ with these appropriate choice of $m,M,\tilde M,k^2$.

\end{proof}

\subsection{End of the proof of Theorem \ref{pert}} We consider the three issues (uniqueness, existence and stability) separately.
\bigskip

{\textbf{Uniqueness}}

We start by proving the uniqueness of the solutions for the operator equation
\[
Au := \curl(a(x)\curl u) + k^2u = f.
\]
\begin{SSCase}
$(s,1/p) \in S \cap \{(s,1/p); p>2, s>0\}.$
\end{SSCase} 
In this range of $s$ and $p$ we have,
$H_{0}^{s,p}(\curl;\Omega) \subset H_0(\curl;\Omega)$.
Since, $$A : H_0(\curl;\Omega) \rightarrow (H_0(\curl;\Omega))'$$
is invertible then the fixed point $u$ of $\mathcal{Q}_f$ is the unique solution to $Au = f.$
\begin{SSCase}
$(s,1/p) \in S \cap \{(s,1/p); p>2, s>0\}^{c}.$
\end{SSCase}
For a given data $f$, let us consider $u_1$ and $u_2$ in $H_{0}^{s,p}(\curl;\Omega)$ be two solutions of the operator equation $Au=f$, where $(s,1/p) \in S \cap \{(s,1/p); p>2, s>0\}^{c}$,
i.e., $Au_1 = Au_2$. Since $A$ is linear then $A(u_1-u_2)=0$. Now, $0 \in (H_{0}^{-s,q}(\curl;\Omega))'$, for all $(s,1/p) \in S \cap \{(s,1/p); p>2, s>0\}$, 
then by applying Case 1, the operator equation $Au=0$ has $u:=0$ as the unique solution in $H_{0}^{s,p}(\curl;\Omega)$, so we have $u_1-u_2=0$. Hence, $A$ is injective. 

\bigskip
\textbf{Existence}
\begin{SSSCase}
$(s,1/p) \in S^{+}$.
\end{SSSCase}
Existence of the solution in $H_{0}^{s,p}(\curl;\Omega)$ of the operator equation
$\mathcal{Q}_fu = u$ is due to the fixed point theorem, as $\mathcal{Q}_f$ is a contraction map.
Hence, the fixed point $u\in H_{0}^{s,p}(\curl;\Omega)$ of $\mathcal{Q}_f$ is a
solution of 
\[
 Au := \curl(a(x)\curl u) + k^2 u = f,
\]
i.e., $$A : H_{0}^{s,p}(\curl;\Omega) \rightarrow (H_{0}^{-s,q}(\curl;\Omega))'$$ is onto, for all 
$(s,1/p)\in S^{+}$ and $\frac{1}{p} + \frac{1}{q} =1$.  
\begin{SSSCase}
$(s,1/p) \in S^{-}$.
\end{SSSCase}
Since the matrix $a$ is symmetric then $A=A^*$. Recall that the adjoint of an invertible operator is invertible. So, $$A^* : (H_{0}^{-s,q}(\curl;\Omega))^{''} \rightarrow (H_{0}^{s,p}(\curl;\Omega))'$$
is invertible for all $(s,1/p)\in S^{+}$. As $H_0^{s,p}(\curl;\Omega)$ is reflexive for all $(s,1/p)\in R_{\Omega}$, see Lemma \ref{refle}, therefore
\[
A : H_{0}^{-s,q}(\curl;\Omega) \rightarrow (H_{0}^{s,p}(\curl;\Omega))'
\]
is invertible for all $(s,1/p)\in S^{+}$,
i.e.
\[
A : H_{0}^{s,p}(\curl;\Omega) \rightarrow (H_{0}^{-s,q}(\curl;\Omega))'
\]
is invertible for all $(s,1/p)\in S^{-}$, recalling that
\[
S^{-}=\{(s,1/p)\in R_{\Omega}^{-} ; (-s,1/q)\in S^{+} \}.
\]

\bigskip

\textbf{Stability}
Finally, we finish the proof by deriving the stability estimate of the solution in terms of the given data.
\begin{SSSSCase}
 $(s,1/p) \in S^{+}$.
\end{SSSSCase}
If $f, g\in(H_{0}^{-s,q}(\curl;\Omega))'$ are given and $u,v$ are the fixed points of $\mathcal{Q}_f, \mathcal{Q}_g$
respectively. Then
\[
\begin{split}
& \|u-v\|_{H_{0}^{s,p}(\curl;\Omega)} \\
& = \|\mathcal{Q}_fu-\mathcal{Q}_gv\|_{H_{0}^{s,p}(\curl;\Omega)}\\
& \leq \mathcal{M}_{s,p}k_{0}(s,p)\|u-v\|_{H_{0}^{s,p}(\curl;\Omega)} + \mathcal{M}_{s,p}\frac{m}{M^2}\|f-g\|_{(H_{0}^{-s,q}(\curl;\Omega))'}
\end{split}
\]
i.e., 
\[
 \|u-v\|_{H_{0}^{s,p}(\curl;\Omega)} \leq \frac{m}{M^2}\mathcal{M}_{s,p}(1-\mathcal{M}_{s,p}k_{0}(s,p))^{-1}\|f-g\|_{(H_{0}^{-s,q}(\curl;\Omega))'}.
\]
Therefore, there exists $C :=  \frac{m}{M^2}\mathcal{M}_{s,p}(1-\mathcal{M}_{s,p}k_{0}(s,p))^{-1}>0,$ such that
\[
 \|u\|_{H_{0}^{s,p}(\curl;\Omega)}\leq C\|f\|_{(H_{0}^{-s,q}(\curl;\Omega))'}.
\]
\begin{SSSSCase}
$(s,1/p) \in S^{-}$.
\end{SSSSCase}
Note that the operator
\[
A : H_{0}^{s,p}(\curl;\Omega) \rightarrow (H_{0}^{-s,q}(\curl;\Omega))'
\]
is invertible and $A^{-1}$ is bounded for all $(s,1/p) \in S^{+}$. Hence by the open mapping theorem the operator $A$ is bounded. 
Now, from [\cite{rudin}, Theorem 4.15] and the reflexivity of the spaces $H_0^{s, p}(\curl;\Omega)$, we obtain that
\[
(A^{*})^{-1} : (H_0^{s,p}(\curl;\Omega))' \rightarrow H_0^{-s,q}(\curl;\Omega)
\]
is a bounded linear operator for all $(s,1/p) \in S^{+}$. Since $A=A^{*}$, then we have,
\[
\begin{split}
\|u\|_{H_0^{s,p}(\curl;\Omega)}
& = \|A^{-1}f\|_{H_0^{s,p}(\curl;\Omega)} \\
& \leq C \|f\|_{(H_0^{-s,q}(\curl;\Omega))'},
\end{split}
\]
for all $(s,1/p) \in S^{-}$, where $C>0$.

\section*{Acknowledgements}
MK was supported by the ERC Starting Grant and he is very thankful to Mikko Salo for his support. MS was partially supported by RICAM. 
The authors also would like to express their gratitude to the University of Jyv{\"a}skyl{\"a}, Finland and RICAM, Austrian Academy of Sciences, Austria, 
where most of the work has been done.

\bibliographystyle{abbrv}
\bibliography{manas}

\begin{thebibliography}{10}

\bibitem{Adams}
R.~A. Adams.
\newblock {\em Sobolev spaces}.
\newblock Academic Press [A subsidiary of Harcourt Brace Jovanovich,
  Publishers], New York-London, 1975.
\newblock Pure and Applied Mathematics, Vol. 65.

\bibitem{BYZ}
G.~Bao, Y.~Li, and Z.~Zhou.
\newblock {$L^p$} estimates of time-harmonic {M}axwell's equations in a bounded
  domain.
\newblock {\em J. Differential Equations}, 245(12):3674--3686, 2008.

\bibitem{Bergh}
J.~Bergh and J.~L{\"o}fstr{\"o}m.
\newblock {\em Interpolation spaces. {A}n introduction}.
\newblock Springer-Verlag, Berlin-New York, 1976.
\newblock Grundlehren der Mathematischen Wissenschaften, No. 223.

\bibitem{Ev}
L.~C. Evans.
\newblock {\em Partial differential equations}, volume~19 of {\em Graduate
  Studies in Mathematics}.
\newblock American Mathematical Society, Providence, RI, 1998.

\bibitem{GT1}
D.~Gilbarg and N.~S. Trudinger.
\newblock {\em Elliptic partial differential equations of second order}.
\newblock Classics in Mathematics. Springer-Verlag, Berlin, 2001.
\newblock Reprint of the 1998 edition.

\bibitem{kato}
L.~Grafakos and S.~Oh.
\newblock The {K}ato-{P}once inequality.
\newblock {\em Comm. Partial Differential Equations}, 39(6):1128--1157, 2014.

\bibitem{Groe}
K.~Gr{\"o}ger.
\newblock A {$W^{1,p}$}-estimate for solutions to mixed boundary value problems
  for second order elliptic differential equations.
\newblock {\em Math. Ann.}, 283(4):679--687, 1989.

\bibitem{Je-Keni}
D.~Jerison and C.~E. Kenig.
\newblock The inhomogeneous {D}irichlet problem in {L}ipschitz domains.
\newblock {\em J. Funct. Anal.}, 130(1):161--219, 1995.

\bibitem{K-S-2}
M.~Kar and M.~Sini.
\newblock Reconstruction of interfaces using {CGO} solutions for the {M}axwell
  equations.
\newblock {\em J. Inverse Ill-Posed Probl.}, 22(2):169--208, 2014.

\bibitem{Me}
N.~G. Meyers.
\newblock An {$L^{p}$}-estimate for the gradient of solutions of second order
  elliptic divergence equations.
\newblock {\em Ann. Scuola Norm. Sup. Pisa (3)}, 17:189--206, 1963.

\bibitem{Mitrea}
D.~Mitrea, M.~Mitrea, and J.~Pipher.
\newblock Vector potential theory on nonsmooth domains in {${\bf R}^3$} and
  applications to electromagnetic scattering.
\newblock {\em J. Fourier Anal. Appl.}, 3(2):131--192, 1997.

\bibitem{MiT}
M.~Mitrea.
\newblock Sharp {H}odge decompositions, {M}axwell's equations, and vector
  {P}oisson problems on nonsmooth, three-dimensional {R}iemannian manifolds.
\newblock {\em Duke Math. J.}, 125(3):467--547, 2004.

\bibitem{Monk}
P.~Monk.
\newblock {\em Finite element methods for {M}axwell's equations}.
\newblock Numerical Mathematics and Scientific Computation. Oxford University
  Press, New York, 2003.

\bibitem{rudin}
W.~Rudin.
\newblock {\em Functional analysis}.
\newblock International Series in Pure and Applied Mathematics. McGraw-Hill,
  Inc., New York, second edition, 1991.

\bibitem{SiY}
M.~Sini and K.~Yoshida.
\newblock On the reconstruction of interfaces using complex geometrical optics
  solutions for the acoustic case.
\newblock {\em Inverse Problems}, 28(5):055013, 22, 2012.

\bibitem{stein}
E.~M. Stein.
\newblock {\em Singular integrals and differentiability properties of
  functions}.
\newblock Princeton Mathematical Series, No. 30. Princeton University Press,
  Princeton, N.J., 1970.

\bibitem{HTRi}
H.~Triebel.
\newblock Function spaces in {L}ipschitz domains and on {L}ipschitz manifolds.
  {C}haracteristic functions as pointwise multipliers.
\newblock {\em Rev. Mat. Complut.}, 15(2):475--524, 2002.

\bibitem{Yin}
H.~M. Yin.
\newblock Regularity of weak solution to {M}axwell's equations and applications
  to microwave heating.
\newblock {\em J. Differential Equations}, 200(1):137--161, 2004.

\end{thebibliography}

\end{document}